\numberwithin{equation}{section}
\newtheorem{thm}{Theorem}[section]
\newtheorem{assume}{Assumption}[]
\newtheorem{prop}[thm]{Proposition}
\newtheorem{lemma}[thm]{Lemma}
\begin{document}

	\author{Jiuk Jang \thanks{jiukjang@snu.ac.kr} and Hyungbin Park \thanks{hyungbin@snu.ac.kr, hyungbin2015@gmail.com}    \\ \\ \normalsize{Department of Mathematical Sciences} \\ 
		\normalsize{Seoul National University}\\
		\normalsize{1, Gwanak-ro, Gwanak-gu, Seoul, Republic of Korea} 
	}
	\title{A discretization scheme for path-dependent FBSDEs and PDEs}
	
	\maketitle

	\abstract{	 
		This study develops a numerical scheme for path-dependent FBSDEs and PDEs. We introduce a Picard iteration method for solving path-dependent FBSDEs, prove its convergence to the true solution, and establish its rate of convergence.
		A key contribution  of our approach is a novel estimator for the martingale integrand in the FBSDE, specifically designed to handle path-dependence more reliably than existing methods.
		We derive a concentration inequality that quantifies the statistical error of this estimator in a Monte Carlo framework.
		Based on these results, we investigate a supervised learning method with neural networks for solving path-dependent PDEs.
		The proposed algorithm is fully implementable and adaptable to a broad class of path-dependent problems.  
	}

	\section{Introduction}

	\subsection{Overview}
	
	Non-Markovianity is a fundamental concept across various disciplines, including mathematics, physics, engineering, and finance. It means that the future state of a system depends not only on its present state but also on its entire past history. 
	Analyzing the non-Markovian property is essential for understanding how certain outcomes emerge from their historical context.
	While the memoryless nature of Markovian processes leads to standard forward-backward stochastic differential equations (FBSDEs) and partial differential equations (PDEs),
	non-Markovian problems give rise to path-dependent FBSDEs and PDEs.
	This added complexity makes it more difficult to derive generalizable insights or to design efficient numerical schemes for non-Markovian problems.

	Markovian FBSDEs and PDEs have been extensively studied.
	A classical example of a Markovian SDE is the geometric Brownian motion
	$$X(s)=x+\mu\int_0^s X(u)\,du+\sigma \int_0^s X(u)\,dW(u)\,,\;s\ge 0$$
	for  $x,\mu,\sigma\in\mathbb{R}$ and a Brownian motion $W$.
	Since the drift and diffusion depend only on the current state $X(u)$, the solution $X$ is Markovian.
	Quantities of interest include expectations of the form 
	$\mathbb{E}[g(X(T))+\int_0^Tf(s,X(s))\,ds]$ for suitable functions $f$ and $g$. This expectation is associated with  the BSDE
	\begin{equation} 
		\begin{aligned}
			Y(s)=g(X(T))+\int_{s}^{T} f(u,X(u))\,du-\int_{s}^{T} Z(u)\, dW(u) \,,\;   0\le s\le T
		\end{aligned}
	\end{equation}
	and the PDE
	\begin{equation}
		\begin{aligned}
			&\partial_{s}v+\frac{1}{2}\sigma^2x^2\partial_{xx} v +\mu x \partial_{x} u +f(s,x)=0 \,,\;0\le s<T\\
			&v(T,x)=g(x)\,.
		\end{aligned}
	\end{equation} 
	Extensive numerical methodologies have been studied for this type of BSDEs and PDEs.

	In this paper we investigate numerical methods for path-dependent FBSDEs and PDEs, where the coefficients may depend on the entire past trajectory. For instance, consider the path-dependent SDE
	$$X(s)=x+\mu\int_0^s \int_0^uX(r)\,dr\,du+\sigma \int_0^s \max_{0\le r\le u} X(r)\,dW(u)\,,\;0\le s\le T$$
	Because the drift and diffusion depend on the history  $(X(r))_{0\le r\le u}$, the solution $X$ is non-Markovian.
	As an example, consider the expectation 
	$\mathbb{E}[g(\min_{0\le r\le T}X(r))+\int_0^Tf(s,\int_0^se^{-ar}X(r)\,dr)\,ds]$.
	This expectation is associated with the path-dependent BSDE
	\begin{equation} 
		\begin{aligned}
			Y(s)=g(\min_{0\le r\le T}X(r))+\int_{s}^{T} f(u,\int_0^ue^{-ar}X(r)\,dr)\,du-\int_{s}^{T} Z(u)\, dW(u) \,,\;   0\le s\le T
		\end{aligned}
	\end{equation}
	and the path-dependent PDE
	\begin{equation} 
		\begin{aligned}
			&\partial_{s}v+\frac{1}{2}\sigma^2(\max_{0\le u \le s}\omega(u))^2\partial_{\omega\omega} v +\mu \Big(\int_0^s \omega(u)\,du \Big)\partial_{\omega} v +f\Big(s,\int_0^s e^{-au}\omega(u)\,du\Big)=0 \,,\;0\le s<T\\
			&v(T,\omega)=g(\min_{0\le u\le T}\omega(u))
		\end{aligned}
	\end{equation} 
	where $\partial_\omega v$ and $\partial_{\omega\omega} v$ denote the vertical first and second functional derivatives. A more general and rigorous formulation of this path-dependent setting is presented in Section~\ref{sec:prelim}.

	This paper makes three key contributions. First,
	our study is the first to
	propose a numerical method for solving path-dependent FBSDEs. 
	While there is extensive literature on numerical methods for Markovian BSDEs, the path-dependent case has not yet been explored. To address this gap,  we develop a Picard-type iteration scheme specifically designed for the path-dependent framework. This method approximates the solution through a sequence of iterations that  account for the intricate dependencies arising from the entire past trajectory of the process.  
	We also conduct an error analysis of this iterative scheme, providing insights into its accuracy and stability.
	These results can be viewed as a generalization of the Markovian BSDE framework studied in \citet{forwardbsde}.
	Although the formulation of the problem is conceptually straightforward, establishing a formal and rigorous proof in a path-dependent framework presents significant technical challenges.

	Second, this paper provides a novel estimator for the martingale integrand in FBSDEs, specifically designed to handle path-dependence more reliably than existing methods. In numerical algorithms for FBSDEs, estimating the martingale integrand is not merely a subsidiary step but one of the primary challenges and a core component of the entire method.
	Several approaches have been developed for estimating $Z$, including least-squares regression methods (\cite{forwardbsde, secondbsdeparabpde}), probabilistic operator-based techniques (\cite{probmethpde}), and Malliavin calculus-based techniques (\cite{bouchard2004discrete, gobet2016approximation, gobet2017adaptive}).
	The performance of these methods strongly depends on the Markovian structure of the problem and the choice of basis functions in the linear approximation.
	Therefore, they are often inapplicable or inefficient for path-dependent FBSDEs.

	Estimating the martingale integrand for non-Markovian functionals is significantly more difficult than in the Markovian case.
	Our proposed estimator is based on a weak approximation of the martingale representation
	(\cite{weakmtg}).
	A key advantage of our estimator is its robustness in path-dependent settings, without the need to select basis functions, often a major source of error in existing methods.   Furthermore, we establish a concentration inequality
	that quantifies the statistical error of the estimator in a Monte Carlo framework.
	This theoretical guarantee, together with its practical implementability, makes the estimator a versatile tool for a wide range of path-dependent problems.

Last, we propose a supervised learning framework based on neural networks for solving path-dependent partial differential equations (PPDEs). While there is some literature on unsupervised neural network approaches to PPDEs, supervised learning methods have not yet been explored.
	To construct the training dataset, we apply our Picard iteration method to generate approximate solutions that capture the full path-dependence of the problem. Building on this foundation, we establish an approximation theorem for neural network learning of PPDEs. In particular, we prove that for every functional satisfying suitable regularity conditions, there exists a neural network that can approximate the functional to an arbitrary degree of accuracy. This result ensures that our method has a solid theoretical guarantee of universal approximation in the path-dependent setting. 

	Related work on numerical methods for FBSDEs and PDEs has been carried out by several authors. A wide range of algorithms for the Markovian setting have been developed in \citet{douglas}, \citet{milstein}, \citet{zhangbsde}, and \citet{forwardbsde}. Numerical schemes for path-dependent PDEs have been proposed in \citet{monoschpde}, \citet{pathmonosch}, \citet{deepgalerkin}, and \citet{deepbsde}. Unsupervised neural network approaches to path-dependent FBSDEs and PDEs have also been explored in \citet{sabate2020solving}, \citet{saporito2020pdgm}, \citet{sirignano2018dgm}, \citet{deepsignature}, and \citet{bayraktar2024deep}.

	The remainder of the paper is organized as follows. Section \ref{sec:prelim} introduces the mathematical preliminaries and assumptions required to establish the numerical scheme for FBSDEs. In Section \ref{sec:Picard}, we present the Picard iteration scheme and derive its convergence rate.  We first prove the convergence under smooth coefficient assumptions, then extend the result to more general settings by constructing appropriate coefficient approximations. 
	Section \ref{sec:PPDE} introduces a supervised learning method for path-dependent partial differential equations. Section \ref{sec:error} provides an error analysis of the Monte Carlo estimator for the martingale integrand. 
	Section \ref{sec:NS}  provides  numerical experiments for  concrete examples.
	Finally, Section \ref{sec:con} concludes the paper.

	\section{Preliminaries}\label{sec:prelim}
	
	In this section, we present the mathematical preliminaries relevant to this paper. Specifically, we cover four topics: functional It\^o  calculus, path-dependent FBSDEs, the Euler scheme for path-dependent SDEs, and the weak approximation of martingale representations.

	\subsection{Functional It\^o calculus}

	Functional It\^o calculus extends classical It\^o calculus to functionals that depend on the entire path of a stochastic process. We present a concise review of functional It\^o calculus in this section and refer the reader to \citet{functionalito} for more details.
	Let $D([0,T],\mathbb{R}^{d})$ denote the collection of all  c$\grave{\text{a}}$dl$\grave{\text{a}}$g paths from $[0,T]$ to $\mathbb{R}^{d}$. We define the space of stopped paths as
	\begin{equation}
		\Lambda_T:= \{(s,\omega(s \wedge \cdot))|\, (s,\omega)\in [0,T]\times D([0,T],\mathbb{R}^{d})\}\,,
	\end{equation}
	endowed with the metric 
	$$d_\infty((s,\omega),(s',\omega')):=|s-s'|+\sup_{0\le u \le T} |\omega(s \wedge u) - \omega'(s' \wedge u)|= |s-s'|+\lVert \omega_s-\omega'_{s'} \rVert_\infty$$ 
	where $\omega_s:=\omega(s\wedge \cdot)$.

	We now introduce the basic notions of regularity for non-anticipative functionals. 
	A map $F:[0,T]\times D([0,T],\mathbb{R}^d)\rightarrow \mathbb{R}^k$ is called a non-anticipative functional if $F(s,\omega)=F(s,\omega_s)$
	for all $(s,\omega) \in [0,T]\times D([0,T],\mathbb{R}^d).$
	A non-anticipative functional $F$ is said to be 
	continuous at fixed times if for all $ u\in [0,T]$, $F(s,\cdot)$ is continuous under the uniform norm. 
	It is said to be	
	left-continuous  (right-continuous, respectively) if for any $(s,\omega)\in \Lambda_T$ and $\epsilon>0$, there exists $\eta>0$ such that    
	$|F(s,\omega)-F(s',\omega')|<\epsilon$ whenever  
	$s>s'$ ($s<s'$, respectively) and $d_\infty((s,\omega),(s',\omega'))<\eta$ 
	for
	$(s',\omega')\in \Lambda_T.$
	A non-anticipative functional $F$ is said to be
	horizontally differentiable at $(s,\omega)\in \Lambda_T$ if
	$$\partial_t F(s,\omega)=\lim_{h \rightarrow 0^+} \dfrac{F(s+h,\omega)-F(s,\omega)}{h}$$
	exists. We say $F$ is  horizontally differentiable if $\partial_t F(s,\omega)$ exists for all $(s,\omega)\in \Lambda_T$.
	Similarly, $F$ is 
	vertically differentiable at $(s,\omega)$ if for $i=1,2,\cdots,d$,
	$$\partial_i F(s,\omega)=\lim_{h\rightarrow 0} \dfrac{F(s,\omega_s+he_i \mathbf{1}_{[s,T]})-F(s,\omega)}{h}$$
	exists where $\{e_i\}_{i=1,\cdots,d}$ is the standard basis of $\mathbb{R}^d$. Define $\nabla_{\omega} F(s,\omega)=(\partial_i F(s,\omega),i=1,2,\cdots,d)$. 
	We say that $F$ is vertically differentiable if $\nabla_{\omega} F(s,\omega)$ exists for all $(s,\omega)\in \Lambda_T$. The $k$-th order vertical derivatives, denoted by $\nabla_{\omega}^k F$, can be defined analogously for $k \ge 2$. 
	Finally, $F$ is said to be 
	boundedness-preserving if for every compact set $K$ of $\mathbb{R}^d$
	and $s_0 \in [0,T)$, there exists a constant $C>0$ such that 
	$ |F(s,\omega)| <C$
	for all $ s \in [0,s_0]$ and $ (s,\omega) \in \Lambda_T$ with $  \omega([0,s]) \subseteq K$.

	We denote by $C_{b,r}^{1,2}(\Lambda_T)$ the class of left-continuous functionals $F$ that are horizontally differentiable with $\partial_t F$ boundedness-preserving and continuous at fixed times, and twice vertically differentiable with boundedness-preserving, right-continuous derivatives $\nabla_{\omega}F$ and $\nabla_{\omega}^2F$.
	A non-anticipative functional $F$ is called locally regular if there exist an increasing sequence of stopping times $(\tau_n)_{n\in \mathbb{N}}$ with $\tau_0=0$ and $\tau_n \rightarrow \infty$ as $n \rightarrow \infty$, and a sequence  $(F_n)_{n\in\mathbb{N}} \subseteq C_{b,r}^{1,2}(\Lambda_T)$ such that
	$$F(s,\omega)=\sum_{n\in \mathbb{N}} F_n(s,\omega)\mathbf{1}_{[\tau_n(\omega),\tau_{n+1}(\omega))}(s)\,,\; \forall (s,\omega)\in \Lambda_T\,.$$
	We denote by $C_{loc,r}^{1,2}(\Lambda_T)$ the set of all locally regular non-anticipative functionals.


	\subsection{Path-dependent FBSDEs}
	This section introduces underlying assumptions and mathematical preliminaries relevant to the study of path-dependent BSDEs.
	Refer to  \citet{pathbsdebm} for more details.
	Let $(W_s)_{s\ge0}$ be an $\ell$-dimensional Brownian motion
	on a probability space $(\Omega, \mathcal{F}, \mathbb{P})$ with the natural filtration $(\mathcal{F}_s)_{s\ge0}$. 
	For $m\in \mathbb{N}$, we define the  spaces
	\begin{equation}
		\begin{aligned}
			\mathbb{L}^0(t,T;\mathbb{R}^m)&:=\{X:[t,T]\to\mathbb{R}^m\,|\, X \textnormal{ is progressively measurable}\}\,,\\ 
			\mathbb{S}^2(t,T;\mathbb{R}^m)&:=\Big\{X \in \mathbb{L}^0(t,T;\mathbb{R}^m)\,|\, X  \textnormal{ is  continuous in time and  }   \mathbb{E} [ \sup_{t\le u \le T} |X(u)|^2 ] < \infty \Big\}\,, \\
			\mathbb{H}^2(t,T;\mathbb{R}^m)&:=\Big\{Z \in \mathbb{L}^0(t,T;\mathbb{R}^m)\,|\, \mathbb{E}\Big[\int_t^T |Z(u)|^2\,du\Big]<\infty \Big\}\,.
		\end{aligned}
	\end{equation}
	For $t \in [0, T]$ and $\gamma \in D([0,T],\mathbb{R}^{d})$, consider the path-dependent SDE 
	\begin{equation} \label{eqn:psde}
		\begin{aligned}
			& X^{\gamma_{t}}(s)=\gamma_{t}(t)+\int_{t}^{s} b(u,X_{u}^{\gamma_{t}})\,du+ \int_{t}^{s} \sigma(u,X_{u}^{\gamma_{t}})\, dW(u)\,,  && t\le s \le T \\
			& X^{\gamma_{t}}(s)=\gamma_{t}(s):=\gamma(t \wedge s)\,,  && 0 \le s \le t
		\end{aligned}
	\end{equation}
	where $b:\Lambda_T \rightarrow \mathbb{R}^d$ and $\sigma: \Lambda_T \rightarrow \mathbb{R}^{d\times \ell} (\mathbb{R})$ are non-anticipative functionals.

	We consider the following assumptions regarding the non-anticipative functionals $b$ and $\sigma$. 
	\begin{assume} \label{asmsde}
		There exists a constant $K_1>0$ such that for all $s,s'\in [0,T]$ and $\omega, \omega'\in D[0,T],\mathbb{R}^{d}),$
		\begin{equation}
			|b(s,\omega)-b(s',\omega')|+\lVert \sigma(s,\omega)-\sigma(s',\omega')\rVert \le K_1(\sqrt{|s-s'|}+\lVert\omega_s-\omega'_{s'}\rVert_{\infty})\,.
		\end{equation}
		Moreover, there exists a constant $K_2>0$ such that
		\begin{equation}
			\sup_{0\le u \le T} (|b(u,\bar{0})|+\lVert\sigma(u,\bar{0})\rVert) \le  K_2 
		\end{equation}
		where $\bar{0}$ denotes the constant path that takes value $0$ on $[0,T]$. 
	\end{assume}

	\begin{prop} \label{regsde}
		Let Assumption \ref{asmsde} hold.
		Then for any $t \in [0, T]$ and $\gamma \in D([0,T],\mathbb{R}^{d})$, there exists a  unique solution $X^{\gamma_{t}}$
		to \eqref{eqn:psde} in $\mathbb{S}^2(0,T;\mathbb{R}^d)$.
		Moreover, there exists a constant $C>0$ such that
		\begin{enumerate}[]
			\item $\mathbb{E}\left[ \sup_{u\in [0,T]}  |X^{\gamma_{t}}(u) |^2\right] \le C(1+\lVert \gamma_{t} \rVert_{\infty}^2)\,,$
			\item $\mathbb{E} \left[|X^{\gamma_{t}}({s})-X^{\gamma_{t}}(s')|^2\right] \le C(1+\lVert \gamma_{t} \rVert_{\infty}^2)|s-s'|\,,$
			\item $\mathbb{E}\left[ \sup_{u\in [0,T]} |X^{\gamma_t}(u)-X^{\gamma'_{t'}}(u)|^2 \right] \le C(1+\lVert\gamma_t\rVert_{\infty}^2+\lVert \gamma'_{t'}\rVert_{\infty}^2)(\lVert \gamma_t - \gamma'_{t'}\rVert_{t \wedge t'}^2+|t-t'|)$
		\end{enumerate}
		for all $t,t' \in [0, T]$, $\gamma,\gamma' \in D([0,T],\mathbb{R}^{d})$, and $s,s'\in [t,T],$ 
	\end{prop}
	\noindent The first two inequalities can be proven directly using standard arguments for SDEs, while the last inequality is established in \citet[Lemma B.3]{seongje}.

	We now consider the BSDE
	\begin{equation} \label{eqn:pbsde}
		\begin{aligned}
			Y^{\gamma_{t}}(s)=g(X_{T}^{\gamma_{t}})+\int_{s}^{T} f(u,X_{u}^{\gamma_{t}},Y^{\gamma_{t}}(u),Z^{\gamma_{t}}(u))\,du-\int_{s}^{T} Z^{\gamma_{t}}(u)\, dW(u) \,,\;   t\le s\le T\,.
		\end{aligned}
	\end{equation}
	
	\begin{assume} \label{asmbsde} Let $f:\Lambda_T \times \mathbb{R}\times \mathbb{R}^\ell \rightarrow \mathbb{R}$ be progressively measurable and $g:D([0,T],\mathbb{R}^{d}) \rightarrow \mathbb{R}$ be $\mathcal{F}_{T}$-measurable.
		There exists a constant $K_1>0$ such that for all $s,s'\in [0,T],$ $\omega, \omega'\in D[0,T],\mathbb{R}^d) $ and $y,y',z,z'\in \mathbb{R},$
		\begin{equation}
			\begin{aligned}
				&|f(s,\omega,y,z)-f(s',\omega',y',z')|\le K_1(\sqrt{|s-s'|}+\lVert\omega_s-\omega'_{s'}\rVert_{\infty}+|y-y'|+|z-z'|)\,,\\
				&|g(\omega)-g(\omega')|\le K_1 \lVert\omega-\omega'\rVert_{\infty}\,.
			\end{aligned}
		\end{equation}
		Moreover, there exists a constant $K_2>0$ such that,
		\begin{equation}
			\sup_{0\le u \le T} (|f(u,\bar{0},0,0)|+|g(\bar{0})|)\le K_2\,.
		\end{equation}
	\end{assume}

	\begin{prop}\label{eqn:BSDE_soln}
		Let Assumptions \ref{asmsde} and \ref{asmbsde} hold. For $t \in [0, T]$ and $\gamma \in D([0,T],\mathbb{R}^{d})$, there exists a unique  solution $(Y^{\gamma_t},Z^{\gamma_t})$ to \eqref{eqn:pbsde} in $\mathbb{S}^2(t,T;\mathbb{R})\times \mathbb{H}^2(t,T;\mathbb{R}^\ell)$.
	\end{prop}
	
	\subsection{Euler scheme for path-dependent SDEs}

	We now demonstrate the Euler scheme for the SDE \eqref{eqn:psde} together with its convergence rate.
	Let $\pi=\{t=t_0<t_1<t_2<\ldots <t_n=T\}$ be a time partition with mesh size $|\pi|:=\max_{0 \le i \le n-1} |t_{i+1}-t_i|$. 
	We define a process $X^{\gamma_{t},\pi}$ on $[0,T]$ by setting
	$X^{\gamma_{t},\pi}(s,\omega)=\gamma(s)$ for $0 \le s \le t_0$ and for  $i=0,1, \cdots,n-1$, 
	\begin{equation} \label{eulerscheme}
		\begin{aligned}
			&X^{\gamma_{t},\pi}(s,\omega)=X^{\gamma_{t},\pi}(t_i,\omega)\,,\;s\in [t_i,t_{i+1})\,,\\
			& X^{\gamma_{t},\pi}(t_{i+1},\omega)=X^{\gamma_{t},\pi}(t_{i},\omega)+ b(t_{i},X^{\gamma_{t},\pi}(\cdot\wedge t_i,\omega))(t_{i+1}-t_i)+\sigma(t_{i},X^{\gamma_{t},\pi}(\cdot\wedge t_i,\omega))(\omega(t_{i+1}-)-\omega(t_{i}-)).
		\end{aligned}
	\end{equation}
	By \citet{weakmtg},   under Assumption \ref{asmsde}, there exists a constant $C>0$ such that
	\begin{equation}
		\label{eqn:eulerconv}
		\max_{0\le i \le n-1}\mathbb{E}\left[ \sup_{s\in [t_i, t_{i+1}]} | X^{\gamma_{t}}(s)-X^{\gamma_{t}, \pi}(s)|^2 \right] \le C(1+\lVert \gamma_{t} \rVert_{\infty}^2)|\pi|\,.
	\end{equation}

	For the Euler approximation $X^{\gamma_{t},\pi}$, consider the BSDE 
	\begin{equation}
		\begin{aligned}
			Y^{\gamma_{t},\pi}(s)=g(X_{T}^{\gamma_{t},\pi})+\int_{s}^{T} f(u,X_{u}^{\gamma_{t},\pi},Y^{\gamma_{t},\pi}(u),Z^{\gamma_{t},\pi}(u))\,du-\int_{s}^{T} Z^{\gamma_{t},\pi}(u)\,dW(u) \,,\; t\le s\le T\,.
		\end{aligned}
	\end{equation}
	By \citet{BSDE}, under 
	Assumptions \ref{asmsde} and \ref{asmbsde}, this BSDE has a unique solution $(Y^{\gamma_{t},\pi},\,Z^{\gamma_{t},\pi})$ in $\mathbb{S}^2(t,T;\mathbb{R})\times \mathbb{H}^2(t,T;\mathbb{R}^\ell)$ and 
	\begin{equation}
		\label{eqn:bsde}
		\mathbb{E}\bigg[ \sup_{t\le u \le T} |Y^{\gamma_t}(u)-Y^{\gamma_t,\pi}(u)|^2\bigg]+\mathbb{E}\bigg[ \int_t^T |Z^{\gamma_t}(u)-Z^{\gamma_t,\pi}(u)|^2 du \bigg] \le C(1+\lVert \gamma_{t} \rVert_{\infty}^2)|\pi| 
	\end{equation}
	for some constant $C>0.$

	\section{Picard iteration method}
	\label{sec:Picard}
	
	We develop a numerical scheme for path-dependent FBSDEs with non-anticipative coefficients. 
	Fix $t \in [0, T]$, $\gamma \in D([0,T],\mathbb{R}^{d})$
	and a time partition
	$\pi=\{t=t_0<t_1<t_2<\ldots <t_n=T\}$
	throughout this section. Define the concatenation operator between two paths $\omega$ and $\tilde{\omega}$ as
	$$\omega \mathop{\oplus}_s \tilde{\omega} =
	\begin{cases}
		\omega(u), & \mbox{if } u\in [0,s) \,,\\
		\tilde{\omega}(u)-\tilde{\omega}(s)+\omega(s), & \mbox{if } u\in [s,T]\,.
	\end{cases}$$
	We construct a sequence of processes $(Y^{\gamma_{t},\pi,m},Z^{\gamma_{t},\pi,m})_{m\in\mathbb{N}\cup\{0\}}$ as follows. Let
	$W^k$ be   the $k$-th coordinate of the Brownian motion $W$
	and define  $\Delta W_{i}^k:=W^k(t_{i+1})-W^k(t_i)$ for  $k=1,2,\cdots,\ell$ and $i=0,1,\cdots,n-1$.   
	We initialize with $(Y^{\gamma_t,\pi,0},Z^{\gamma_t,\pi,0})=(0,0)$ and then, for $i=0,1,\dots,n$   and $k=1,2,\dots,\ell$, define iteratively 
	\begin{equation} \label{scheme}
		\begin{aligned}
			Y^{\gamma_{t},\pi,m+1}(t_i,\omega)&= \mathbb{E}\bigg[ g(X_{T}^{\gamma_{t},\pi}(\omega \mathop{\oplus}_{t_i} W_T))\\
			&
			\quad\left.+\sum_{j=i}^{n-1} f(t_j,X_{t_j}^{\gamma_{t},\pi}(\omega \mathop{\oplus}_{t_i} W_T),Y^{\gamma_{t},\pi,m}(t_j,\omega \mathop{\oplus}_{t_i} W_T),Z^{\gamma_{t},\pi,m}(t_j,\omega \mathop{\oplus}_{t_i} W_T))(t_{j+1}-t_j)\right]\\
			Z_{k}^{\gamma_{t},\pi,m+1}(t_i,\omega)&=\mathbb{E}\Bigg[\frac{\Delta W_i^k}{t_{i+1}-t_i} \Bigg( g(X_{T}^{\gamma_{t},\pi}(\omega \mathop{\oplus}_{t_i} W_T))\\
			&\quad+\sum_{j=i+1}^{n-1} f(t_j,X_{t_j}^{\gamma_{t},\pi}(\omega \mathop{\oplus}_{t_i} W_T),Y^{\gamma_{t},\pi,m}(t_j,\omega \mathop{\oplus}_{t_i} W_T),Z^{\gamma_{t},\pi,m}(t_j,\omega \mathop{\oplus}_{t_i} W_T))(t_{j+1}-t_j)\Bigg) \Bigg]
		\end{aligned}
	\end{equation}
	with the  convention $\Delta W_{n}^k=0$  and extend $(Y^{\gamma_t,\pi,m+1},Z_k^{\gamma_t,\pi,m+1})$ to RCLL processes by piecewise constant interpolation.

	A motivation for this iterative definition  is as follows. 	
	Given   $(Y^{\gamma_t,\pi,m},Z^{\gamma_t,\pi,m})$, 
	let $(\tilde{Y},\tilde{Z})$ be a solution to the BSDE 
	\begin{equation}\label{eqn:motiv}
		\tilde{Y}(s)=g(X_T^{\gamma_t,\pi})+\int_s^Tf(\underline{u},X_{\underline{u}}^{\gamma_t,\pi},Y^{\gamma_t,\pi,m}(\underline{u}),Z^{\gamma_t,\pi,m}(\underline{u}))\,du-\int_s^T\tilde{Z}(u)\,dW(u) \,,\;t\le s\le T
	\end{equation}
	where $\underline{u}=t_i$ for $t_i\le u<t_{i+1}$.
	We then define
	$Y^{\gamma_t,\pi,m+1}(s)=\tilde{Y}(s)$,  $Z^{\gamma_t,\pi,m+1}(s)=\tilde{Z}(s)$ for $s=t_0,\cdots,t_n$ and
	extend them to RCLL processes by piecewise constant interpolation.
	This construction represents a typical  Picard-type iteration procedure for BSDEs and is used to compute the next iterative step $(Y^{\gamma_t,\pi,\,m+1}, Z^{\gamma_t,\pi,\,m+1}).$

	We now show that $\tilde{Y}(t_i)$ and $\tilde{Z}(t_i)$ can be represented in the expectation form given in \eqref{scheme}. 
	The expression for $\tilde{Y}(t_i)$ is obtained directly by taking the conditional expectation $\mathbb{E}(\,\cdot\,|\mathcal{F}_{t_i})$ to \eqref{eqn:motiv} with $s=t_i.$
	For $\tilde{Z}(t_i)$, observe that 
	\begin{equation} 
		\begin{aligned}
			\int_t^T\tilde{Z}(u)\,dW(u)
			&=-\tilde{Y}(t)+g(X_T^{\gamma_t,\pi})+\sum_{j=0}^{n-1} f(t_j,X_{t_j}^{\gamma_{t},\pi},Y^{\gamma_{t},\pi,m}(t_j),Z^{\gamma_{t},\pi,m}(t_j))(t_{j+1}-t_j)\,.
		\end{aligned}
	\end{equation}
	Taking the conditional expectation $\mathbb{E}(\,\cdot\,|\mathcal{F}_s)$ for $s\in [t,T],$ we have
	$\int_t^s\tilde{Z}(u)\,dW(u)		=-\tilde{Y}(t)+F^{\pi,m}(s,W_s)$
	where
	\begin{equation}
		\begin{aligned}
			F^{\pi,m}(s,\omega)&:= \mathbb{E}\bigg[ g(X_{T}^{\gamma_{t},\pi}(\omega \mathop{\oplus}_{s} W_T))\\
			&+\sum_{j=0}^{n-1} f(t_j,X_{t_j}^{\gamma_{t},\pi}(\omega \mathop{\oplus}_{s} W_T),Y^{\gamma_{t},\pi,m}(t_j,\omega \mathop{\oplus}_{s} W_T),Z^{\gamma_{t},\pi,m}(t_j,\omega \mathop{\oplus}_{s} W_T))(t_{j+1}-t_j)\bigg].
		\end{aligned}
	\end{equation}
	As the processes $X^{\gamma_t,\pi}$, $Y^{\gamma_t,\pi,m}$, $Z^{\gamma_t,\pi,m}$ depend only on $$\omega(t_0),\,\omega(t_1-)-\omega(t_0),\cdots,\omega(t_n-)-\omega(t_{n-1}-)\,,$$ 
	by an argument similar to that given in \citet[Theorem  3.2]{weakmtg}, we obtain
	$F^{\pi,m} \in C_{loc,r}^{1,2}(\Lambda_T)$.   Then, by weak approximation of martingale representation formula described in \citet[Theorem 3.1, Corollary 3.1]{weakmtg}, it follows that 
	$\tilde{Z}(t_i)=\nabla_{\omega}F^{\pi,m}(t_i,\omega)$ and 
	$\tilde{Z}(t_i)$ has the form given in \eqref{scheme}.

	\subsection{Smooth approximation}
	\label{sec:smooth}
	
	In this section, 
	we prove that the scheme in \eqref{scheme} converges to the solution $(Y^{\gamma_t},Z^{\gamma_t})$ under the assumption of sufficiently smooth coefficients. 
	We first introduce  the Fr$\acute{e}$chet derivative with respect to the path variable.
	For $\varphi:[0,T]\times D([0,T],\mathbb{R}^{d})\to\mathbb{R}$, the Fr$\acute{e}$chet derivative of $\varphi$
	with respect to $\omega$ at  $(s,\omega)$ is  defined as a linear operator  $D\varphi(s,\omega)$ satisfying
	\begin{equation}
		\lim_{\lVert \tilde{\omega} \rVert_{\infty} \rightarrow 0} \dfrac{| \varphi(s,\omega+\tilde{\omega})-\varphi(s,\omega)-D\varphi(s,\omega)(\tilde{\omega})|}{\lVert \tilde{\omega} \rVert_{\infty}}=0\,.
	\end{equation}
	A function $\varphi$ is said to be
	Fr$\acute{e}$chet differentiable  if
	$\varphi$ admits a Fr$\acute{e}$chet derivative at every
	$(s,\omega)\in [0,T]\times D([0,T],\mathbb{R}^{d}).$	
	We say the Fr$\acute{e}$chet derivative $D\varphi$ is
	bounded if $| D\varphi(s,\omega)(\tilde{\omega})| \le C \lVert \tilde{\omega} \rVert_{\infty}$ for all $(s,\omega,\tilde{\omega})\in [0,T]\times D([0,T],\mathbb{R}^{d})\times D([0,T],\mathbb{R}^{d})$, and is  continuous if  the map $(s,\omega) \mapsto D\varphi(s,\omega)(\tilde{\omega})$ is continuous for all $\tilde{\omega}\in D([0,T],\mathbb{R}^{d})$.

	\begin{assume} \label{convasm} Consider the following conditions.
		\begin{enumerate}[(i)]
			\item For    $\varphi=b_i,\sigma_{ij},g,$ $1\le i\le d,$ $1\le j\le \ell,$ the function $\varphi$ is twice Fr$\acute{e}$chet differentiable  with bounded derivatives, and $D^2\varphi$ is uniformly Lipschitz continuous in $\omega$.
			\item The function $f$ is jointly twice differentiable in $(\omega, y,z)$ with bounded derivatives, and the second-order derivatives are uniformly Lipschitz continuous in $(\omega, y, z)$.

		\end{enumerate}
	\end{assume}

	We observe that under Assumptions \ref{asmsde} and \ref{asmbsde},  by Proposition \ref{eqn:BSDE_soln}, the BSDE \eqref{eqn:pbsde} has a unique  solution $(Y^{\gamma_t},Z^{\gamma_t})$ in $\mathbb{S}^2(t,T;\mathbb{R})\times \mathbb{H}^2(t,T;\mathbb{R}^\ell)$.
	The following lemma is directly obtained from \cite[Theorem 3.1]{pathkac}, thus we omit the proof.
	\begin{lemma} \label{regularityDu}
		Let Assumptions \ref{asmsde}, \ref{asmbsde},  \ref{convasm} hold.  Then, a non-anticipative functional $u:[0,T]\times D([0,T],\mathbb{R}^d)\to\mathbb{R}$ defined as $u(t,\gamma):=Y^{\gamma_t}(t)$ is vertically differentiable and $Z^{\gamma_t}(s)=\nabla_\omega  u(s,X^{\gamma_t}_s)\sigma(s,X^{\gamma_t}_s)$ almost surely. Moreover, there is a constant $C>0$ such that
		\begin{equation}
			\label{eqn:reg_u}
			|\nabla_\omega u(t,\gamma) |^2\le C\,,\;|\nabla_\omega u(t,\gamma)- \nabla_\omega u(\tilde{t},\gamma) |^2 \le C(1+\lVert \gamma_{t\vee \tilde{t}} \rVert^2_{\infty})|t-\tilde{t}|	\end{equation}
		for all $t,\tilde{t}\in[0,T]$ and $\gamma \in D([0,T],\mathbb{R}^{d}).$ 
	\end{lemma}

	\begin{prop} \label{pieceestimation}
		Let Assumptions \ref{asmsde}, \ref{asmbsde}, \ref{convasm} hold. Then, there is a constant $C>0$ such that for any $t\in[0,T]$, $\gamma \in D([0,T],\mathbb{R}^{d})$ and any partition $\pi$ of $[t,T],$ we have
		\begin{equation}\label{eqn:esti_Y}
			\max_{0\le i\le n-1}\mathbb{E}\Big[\sup_{t_i\le s\le t_{i+1}}|Y^{\gamma_t}(s)-Y^{\gamma_t}(t_i)|^2\Big] \le C(1+\lVert{\gamma_t}\rVert_{\infty}^2)|\pi|\,,
		\end{equation} and
		\begin{equation}
			\sum_{i=0}^{n-1} \mathbb{E}\bigg[\int_{t_i}^{t_{i+1}} | Z^{\gamma_t}(s)-Z^{\gamma_t}(t_i)|^2\,ds\bigg] \le C(1+\lVert{\gamma_t}\rVert_{\infty}^4)|\pi|\,.
		\end{equation} 
	\end{prop}
	\begin{proof}
		In this proof, $C$ is a generic constant, independent of $t$ and $\gamma$, possibly varying from line to line.  For each $i=0,1,\cdots,n-1$ and $s\in[t_i, t_{i+1}]$,
		\begin{equation}
			Y^{\gamma_t}(s)-Y^{\gamma_t}(t_i)=\int_{t_i}^{s} f(u,X_{u}^{\gamma_{t}},Y^{\gamma_{t}}(u),Z^{\gamma_{t}}(u))\,du-\int_{t_i}^{s} Z^{\gamma_{t}}(u)\, dW(u) \,.
		\end{equation}
		Since $X^{\gamma_t}\in \mathbb{S}^2(0,T;\mathbb{R}^m)$, $Y^{\gamma_t}\in \mathbb{S}^2(t,T;\mathbb{R})$ and $Z^{\gamma_t}(u)\le C(1+\lVert X_u^{\gamma_t}\rVert_{\infty})$ for $t_{i}\le u\le t_{i+1}$, the first inequality follows immediately.
		To prove the second inequality, for each $i$ and $s\in[t_i, t_{i+1})$, observe that
		\begin{equation}
			\begin{aligned}
				&\mathbb{E}\Big[|Z^{\gamma_t}(s)-Z^{\gamma_t}(t_i)|^2\Big]=
				\mathbb{E}\Big[| \nabla_\omega u(s,X^{\gamma_t}_s)\sigma(s,X^{\gamma_t}_s)-\nabla_\omega u(t_i,X^{\gamma_t}_{t_i})\sigma(t_i,X^{\gamma_t}_{t_i})|^2\Big]\\
				\le\,& \mathbb{E}\Big[ | \nabla_\omega u(s,X^{\gamma_t}_s)|^2\lVert \sigma(s,X^{\gamma_t}_s)-\sigma(t_i,X^{\gamma_t}_{t_i})\rVert^2\Big]+\mathbb{E}\Big[ |\nabla_\omega u(s,X^{\gamma_t}_s)-\nabla_\omega u(t_i,X^{\gamma_t}_{t_i})|^2\lVert\sigma(s,X^{\gamma_t}_s)\rVert^2\Big]
				\\\le\,&C\mathbb{E}\Big[(|s-t_i|+\lVert X^{\gamma_t}_s - X^{\gamma_t}_{t_i} \rVert^2_{\infty})|\nabla_\omega u(t_i,X^{\gamma_t}_{t_i})|^2\bigg]+\mathbb{E}\bigg[(1+\lVert X_T^{\gamma_t} \rVert_{\infty}^2)^2|s-t_i|\Big]\\
				\le\,&C \mathbb{E}\Big[|s-t_i|+\lVert X^{\gamma_t}_s - X^{\gamma_t}_{t_i} \rVert^2_{\infty}\Big]+\mathbb{E}\Big[(1+\lVert X_T^{\gamma_t} \rVert_{\infty}^2)^2|s-t_i|\Big]\\
				\le\,& C(1+\lVert \gamma_t \rVert^4_{\infty})|\pi|\,.
			\end{aligned}
		\end{equation}
		We have used  Lemma \ref{regularityDu} and the Lipschitz condition on $\sigma$ in Assumption \ref{asmsde}.
	\end{proof}

	For coefficients of the FBSDE satisfying Assumptions \ref{asmsde}, \ref{asmbsde}, and \ref{convasm}, 
	we verify that
	the Picard-type iteration 
	$(Y^{\gamma_{t},\pi,m},Z^{\gamma_{t},\pi,m})$
	converges  
	to the solution $(Y^{\gamma_{t}},Z^{\gamma_{t}})$ of the BSDE \eqref{eqn:pbsde}. Moreover, we show that the convergence rate
	is of the order $ |\pi|+(\frac{1}{2}+|\pi|)^m$ 
	as $|\pi|\to0,m\to\infty.$  
	We define a pair of processes $(Y^{\gamma_t,\pi,\infty},Z^{\gamma_t,\pi,\infty})$  as follows. For $t_i\in \pi,$  
	\begin{equation}
		\begin{aligned}
			&Y^{\gamma_t,\pi,\infty}(t_n)=g(X_T^{\gamma_t,\pi})\\
			&Z_{k}^{\gamma_t,\pi,\infty}(t_i)=\mathbb{E}\bigg[\frac{\Delta W_i^k}{t_{i+1}-t_i} Y^{\gamma_t,\pi,\infty}(t_{i+1}) \bigg|\, \mathcal{F}_{t_i}\,\bigg]\,,\;k=1,2,\cdots,\ell,
		\end{aligned}
	\end{equation}
	and let $ Y^{\gamma_t,\pi,\infty}(t_{i})$ be a solution to 
	\begin{equation}\begin{aligned}   
			Y^{\gamma_t,\pi,\infty}(t_{i})=\mathbb{E}\Big[Y^{\gamma_t,\pi,\infty}(t_{i+1})\Big|\, \mathcal{F}_{t_i}\,\Big]+f(t_i,X^{\gamma_t,\pi},Y^{\gamma_t,\pi,\infty}(t_{i}),Z^{\gamma_t,\pi,\infty}(t_{i}))(t_{i+1}-t_i)\,.
		\end{aligned}
	\end{equation}
	This solution exists because the mapping  $y\mapsto f(t,\omega,y,z)|\pi|$ has a Lipschitz constant $K_1|\pi|$, which is less than $1$ when $|\pi|$ is sufficiently small.
	We extend  $(Y^{\gamma_t,\pi,\infty},Z^{\gamma_t,\pi,\infty})$ to RCLL processes by piecewise constant interpolation.
	The following proposition is the   main result of this section. 
	
	\begin{prop} \label{convscheme}
		Let Assumptions \ref{asmsde}, \ref{asmbsde},  \ref{convasm} hold. Then for any $t\in[0,T]$ and $\gamma \in D([0,T],\mathbb{R}^{d})$, there exists a constant $C>0$ such that 
		\begin{equation}
			\begin{aligned}
				&\max_{0\le i\le n-1} \mathbb{E}\bigg[\sup_{t_i\le s \le t_{i+1}} |Y^{\gamma_{t}}(s)-Y^{\gamma_{t},\pi,m}(t_i)|^2\bigg]+\sum_{i=0}^{n-1} \mathbb{E}\bigg[\int_{t_i}^{t_{i+1}} | Z^{\gamma_{t}}(s)-Z^{\gamma_{t},\pi,m}(t_i)|^2\,ds\bigg]\\
				& \le  C \Big( |\pi|+\Big(\frac{1}{2}+C|\pi|\Big)^m\Big)
			\end{aligned}       
		\end{equation}
		provided $|\pi|$ is  sufficiently small.
	\end{prop}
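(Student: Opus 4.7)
The plan is to derive the bound by a straightforward triangle-inequality split, introducing the auxiliary discrete BSDE $(Y^{\gamma_t,\pi,\infty},Z^{\gamma_t,\pi,\infty})$ as an intermediate object between the continuous solution $(Y^{\gamma_t},Z^{\gamma_t})$ and the $m$-th Picard iterate $(Y^{\gamma_t,\pi,m},Z^{\gamma_t,\pi,m})$. For the $Y$-part, on each subinterval $[t_i,t_{i+1}]$ I would apply $(a+b)^2\le 2a^2+2b^2$ to write
\begin{equation}
\sup_{t_i\le s\le t_{i+1}}\!\bigl|Y^{\gamma_t}(s)-Y^{\gamma_t,\pi,m}(t_i)\bigr|^2 \le 2\!\sup_{t_i\le s\le t_{i+1}}\!\bigl|Y^{\gamma_t}(s)-Y^{\gamma_t,\pi,\infty}(t_i)\bigr|^2 + 2\bigl|Y^{\gamma_t,\pi,\infty}(t_i)-Y^{\gamma_t,\pi,m}(t_i)\bigr|^2,
\end{equation}
where the second term has no sup because $Y^{\gamma_t,\pi,m}$ and $Y^{\gamma_t,\pi,\infty}$ are both piecewise constant on $[t_i,t_{i+1})$. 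Taking expectation and maximizing over $i$, the first term is controlled by Lemma \ref{backschemeconv} giving $C(1+\|\gamma_t\|_\infty^2)|\pi|$, and the second by Lemma \ref{cauchysequence} giving $C(\tfrac12+C|\pi|)^m$.

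For the $Z$-part, I would use the same splitting pointwise in $s$:
\begin{equation}
\bigl|Z^{\gamma_t}(s)-Z^{\gamma_t,\pi,m}(t_i)\bigr|^2 \le 2\bigl|Z^{\gamma_t}(s)-Z^{\gamma_t,\pi,\infty}(t_i)\bigr|^2 + 2\bigl|Z^{\gamma_t,\pi,\infty}(t_i)-Z^{\gamma_t,\pi,m}(t_i)\bigr|^2.
\end{equation}
Integrating over $[t_i,t_{i+1}]$, taking expectations, and summing over $i$, the first contribution is again handled by Lemma \ref{backschemeconv}, while the second becomes $2\sum_i h_i\,\mathbb{E}[|Z^{\gamma_t,\pi,\infty}(t_i)-Z^{\gamma_t,\pi,m}(t_i)|^2]$, which is exactly the quantity bounded in Lemma \ref{cauchysequence}.

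Adding the $Y$ and $Z$ estimates, and absorbing the fixed factor $(1+\|\gamma_t\|_\infty^2)$ into the constant $C$ (since $\gamma_t$ is fixed throughout the section), produces the advertised bound $C(|\pi|+(\tfrac12+C|\pi|)^m)$, valid provided $|\pi|$ is small enough so that Lemma \ref{cauchysequence} applies (and, as a side consequence, so that $\tfrac12+C|\pi|<1$, ensuring the geometric contraction). There is no real obstacle in this step, since both lemmas are stated precisely in the form needed; the only small bookkeeping point is to ensure that the piecewise-constant extension of $Y^{\gamma_t,\pi,m}$ is used so that the $\sup$-in-time on the left is simply an expectation-without-sup on the $\infty$-vs-$m$ side, which is why Lemma \ref{cauchysequence}'s pointwise-in-$t_i$ bound suffices.
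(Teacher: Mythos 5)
Your proposal is correct and follows exactly the route the paper intends: the paper states that Proposition \ref{convscheme} is a direct consequence of Lemmas \ref{backschemeconv} and \ref{cauchysequence}, and your triangle-inequality split through the intermediate object $(Y^{\gamma_t,\,\pi,\,\infty},Z^{\gamma_t,\,\pi,\,\infty})$, with the observation that the piecewise-constant interpolation turns the $Z$-integrals into the weighted sums $\sum_i h_i\,\mathbb{E}\bigl[|Z^{\gamma_t,\,\pi,\,\infty}(t_i)-Z^{\gamma_t,\,\pi,\,m}(t_i)|^2\bigr]$ appearing in Lemma \ref{cauchysequence}, is precisely the omitted bookkeeping.
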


	\begin{proof}
		For notational convenience, we set $\ell=1$ and write $h_i := t_{i+1}-t_i$ for $i=0,\cdots,n-1.$ Define  $$\hat{Z}^{\gamma_t,\pi}(t_i):=\dfrac{1}{h_i} \mathbb{E}\bigg[\displaystyle\int_{t_i}^{t_{i+1}} Z^{\gamma_t}(s) \,ds \bigg| \mathcal{F}_{t_i}\bigg]\,.$$ 	We  observe that
		\begin{equation}\label{eqn:ZZ}
			\sum_{i=0}^{n-1}\mathbb{E}\bigg[\displaystyle\int_{t_i}^{t_{i+1}} | Z^{\gamma_t}(s)-\hat{Z}^{\gamma_t,\pi}(t_i)|^2\, ds\bigg] \le C(1+\lVert \gamma_t \rVert_{\infty}^4)|\pi|\,.
		\end{equation}
		Since 
		\begin{equation}
			\mathbb{E}\bigg[\displaystyle\int_{t_i}^{t_{i+1}} | Z^{\gamma_t}(s)-\hat{Z}^{\gamma_t,\pi}(t_i)|^2 \,ds\bigg]\le \mathbb{E}\bigg[\displaystyle\int_{t_i}^{t_{i+1}} |Z^{\gamma_t}(s)-\eta|^2 \,ds\bigg] 
		\end{equation} for all $\eta\in \mathbb{L}^2(\mathcal{F}_{t_i})$, the inequality in  \eqref{eqn:ZZ} follows by choosing $\eta=Z^{\gamma_t}(t_i)$ and applying Proposition \ref{pieceestimation}.
		
		We now prove that
		for any $t\in[0,T]$ and $\gamma \in D([0,T],\mathbb{R}^{d})$,  there is a constant $C>0$ such that  
		\begin{equation}\label{eqn:m_infty}
			\begin{aligned}
				&\max_{0\le i\le n-1} \mathbb{E}\bigg[\sup_{t_i\le u \le t_{i+1}} |Y^{\gamma_{t}}(u)-Y^{\gamma_t,\pi,\infty}(t_i)|^2\bigg]+\sum_{i=0}^{n-1} \mathbb{E}\bigg[\int_{t_i}^{t_{i+1}} | Z^{\gamma_t}(u)-Z^{\gamma_t,\pi,\infty}(t_i)|^2\,du\bigg]\\
				&\le C(1+\lVert{\gamma_t}\rVert_{\infty}^4)|\pi| 
			\end{aligned}
		\end{equation}	 for sufficiently small $|\pi|$.	
		Define $\triangle Y(t_i):=Y^{\gamma_t,\pi,\infty}(t_i)-Y^{\gamma_t}(t_i)$ and $\triangle Z (t_i):=Z^{\gamma_t,\pi\infty}(t_i)-\hat{Z}^{\gamma_t,\pi}(t_i)$ for all $i=0,\cdots,n.$ According to the martingale representation theorem, there exists a process $\bar{Z}^{\gamma_t,\pi,\infty}$ such that 
		\begin{equation}
			Y^{\gamma_t,\pi,\infty}(t_{i+1})=\mathbb{E}\bigg[Y^{\gamma_t,\pi,\infty}(t_{i+1}) \bigg| \mathcal{F}_{t_i}\bigg]+ \int_{t_i}^{t_{i+1}} \bar{Z}^{\gamma_t,\pi,\,\infty}(s)\, dW(s).
		\end{equation}
		From the definition of $Y^{\gamma_t,\pi\,\infty}(t_i)$, 
		\begin{equation}
			\begin{aligned}
				\triangle Y(t_i)&=\triangle Y(t_{i+1})+\int_{t_i}^{t_{i+1}} (f(t_i,X^{\gamma_t,\pi},Y^{\gamma_t,\pi,\infty}(t_i),Z^{\gamma_t,\pi,\infty}(t_i)-f(s,X^{\gamma_t},Y^{\gamma_t}(s),Z^{\gamma_t}(s))))\,ds\\
				&-\int_{t_i}^{t_{i+1}} \triangle \bar{Z}(s)\, dW(s) 
			\end{aligned}
		\end{equation}
		where $\triangle \bar{Z}(s):= \bar{Z}^{\gamma_t,\pi,\infty}(s)-Z^{\gamma_t}(s)$. Observe that 
		\begin{equation}
			\begin{aligned}
				Z^{\gamma_t,\pi,\infty}(t_i)&=\frac{1}{h_i}\mathbb{E}\bigg[(W(t_{i+1})-W(t_i))Y^{\gamma_t,\pi,\infty}(t_{i+1})\bigg|\mathcal{F}_{t_i}\bigg]\\
				&=\frac{1}{h_i}\mathbb{E}\bigg[(W(t_{i+1})-W(t_i))\int_{t_i}^{t_{i+1}} \bar{Z} ^{\gamma_t,\pi,\infty}(s)\,dW(s)
				\bigg|\mathcal{F}_{t_i}\bigg]\\
				&=\frac{1}{h_i} \mathbb{E}\bigg[\int_{t_i}^{t_{i+1}} \bar{Z} ^{\gamma_t,\pi,\infty}(s)\,ds \bigg|\mathcal{F}_{t_i}\bigg].
			\end{aligned}
		\end{equation}
		Then, by applying H$\ddot{\text{o}}$lder's inequality and Jensen's inequality, we obtain
		\begin{equation}
			\begin{aligned}
				\mathbb{E}\Big[|\triangle Z(t_i)|^2\Big]&=\mathbb{E}\Big[|Z^{\gamma_t,\pi\infty}(t_i)-\hat{Z}^{\gamma_t,\pi}(t_i)|^2\Big]\\
				&=\mathbb{E}\bigg[\left| \frac{1}{h_i} \mathbb{E}\bigg[\int_{t_i}^{t_{i+1}} \bar{Z} ^{\gamma_t,\pi,\infty}(s)\,ds \bigg|\mathcal{F}_{t_i}\bigg]-\frac{1}{h_i}E\bigg[\int_{t_i}^{t_{i+1}} Z^{\gamma_t}(s)\, ds \bigg| \mathcal{F}_{t_i}\bigg] \right|^2\bigg]\\ &\le \frac{1}{h_i} \mathbb{E}\bigg[ \int_{t_i}^{t_{i+1}}|\triangle \bar{Z}(s)|^2\, ds\bigg].
			\end{aligned}
		\end{equation}

		Now, define $I_s:=f(t_i,X^{\gamma_t,\pi},Y^{\gamma_t,\pi,\infty}(t_i),Z^{\gamma_t,\pi,\infty}(t_i))-f(s,X^{\gamma_t},Y^{\gamma_t}(s),Z^{\gamma_t}(s))$ for $i=0,\cdots,n-1$ and $t_i\le s <t_{i+1}$. From Assumption \ref{asmbsde}, 
		\begin{equation}
			\begin{aligned}
				| I_s |^2 &\le C(| s-t_i |+ \lVert X_s^{\gamma_t}-X_{t_i}^{\gamma_t,\pi}\rVert_{\infty}^2+| Y^{\gamma_t}(s)-Y^{\gamma_t,\pi,\infty}(t_i)|^2+| Z^{\gamma_t}(s)-Z^{\gamma_t,\pi,\infty}(t_i)|^2) \\
				&\le C(| s-t_i |+ \lVert X_s^{\gamma_t}-X_{t_i}^{\gamma_t} \rVert_{\infty}^2 +\lVert X_{t_i}^{\gamma_t}-X_{t_i}^{\gamma_t,\pi}\rVert_{\infty}^2+| Y^{\gamma_t}(s)-Y^{\gamma_t}(t_i)|^2+|\triangle Y(t_i)|^2\\
				&\quad+| Z^{\gamma_t}(s)-\hat{Z}^{\gamma_t,\pi}(t_i)|^2+|\triangle Z(t_i)|^2)\,.
			\end{aligned}
		\end{equation}
		Then, for all $\epsilon>0$,
		\begin{equation}
			\begin{aligned}
				&\quad \mathbb{E}\Big[|\triangle Y(t_i)|^2+h_i|\triangle Z(t_i)|^2\Big]
				\le \mathbb{E}\bigg[|\triangle Y(t_i)|^2+\int_{t_i}^{t_{i+1}}|\triangle \bar{Z}(s)|^2\,ds\bigg]\\
				&\le \mathbb{E}\bigg[\bigg(\triangle Y(t_{i+1})+\int_{t_i}^{t_{i+1}} I_s \,ds\bigg)^2\bigg]
				\le \bigg(1+\frac{h_i}{\epsilon}\bigg)\mathbb{E}\bigg[|\triangle Y(t_{i+1})|^2\bigg]+(h_i+\epsilon)\mathbb{E}\bigg[\int_{t_i}^{t_{i+1}} |I_s|^2\, ds\bigg]\\
				&\le \bigg(1+\frac{h_i}{\epsilon}\bigg)\mathbb{E}\bigg[|\triangle Y(t_{i+1})|^2\bigg]+C(h_i+\epsilon)h_i \mathbb{E}\bigg[| \triangle Y(t_i) |^2+| \triangle Z(t_i) |^2\bigg]\\
				&\quad+C(h_i+\epsilon)\mathbb{E}\bigg[\int_{t_i}^{t_{i+1}} | Z^{\gamma_t}(s)-\hat{Z}^{\gamma_t,\pi}(t_i)|^2 \,ds\bigg] + C(1+\lVert \gamma_t \rVert_{\infty}^2)h_i^2.
			\end{aligned}
		\end{equation}
		Let $|\pi|$ and $\epsilon$ be sufficiently small such that 
		\begin{equation}\label{eqn:gron}
			\begin{aligned}
				&\mathbb{E}\Big[|\triangle Y(t_i)|^2+\frac{h_i}{2}|\triangle Z(t_i)|^2\Big]\\
				\le\; & (1+Ch_i)\mathbb{E}\bigg[|\triangle Y(t_{i+1})|^2\bigg]+ C\mathbb{E}\bigg[\int_{t_i}^{t_{i+1}} | Z^{\gamma_t}(s)-\hat{Z}^{\gamma_t,\pi}(t_i)|^2 \,ds\bigg]+C(1+\lVert \gamma_t \rVert_{\infty}^2)h_i^2.    
			\end{aligned}
		\end{equation}
		Using the backward discrete Gr\"onwall inequality and \eqref{eqn:ZZ}, we obtain 
		\begin{equation}
			\begin{aligned}
				&\quad \max_{0\le i \le n} \mathbb{E}\Big[|\triangle Y(t_i)|^2\Big]\\
				& \le C\mathbb{E}\bigg[\lVert X_T^{\gamma_t}-X_T^{\gamma_t,\pi}\rVert_{\infty}^2\bigg]+ C\sum_{i=0}^{n-1}\mathbb{E}\bigg[\int_{t_i}^{t_{i+1}} | Z^{\gamma_t}(s)-\hat{Z}^{\gamma_t,\pi}(t_i)|^2 \,ds\bigg]+C(1+\lVert \gamma_t \rVert_{\infty}^2)|\pi|\\
				& \le C(1+\lVert{\gamma_t}\rVert_{\infty}^4)|\pi|+C(1+\lVert{\gamma_t}\rVert_{\infty}^2)|\pi| \le C(1+\lVert{\gamma_t}\rVert_{\infty}^4)|\pi|.
			\end{aligned}	
		\end{equation}
		Summing over $i=0,\cdots,n-1$ in \eqref{eqn:gron}, we obtain
		\begin{equation}
			\begin{aligned}
				\sum_{i=0}^{n-1}\frac{h_i}{2}\mathbb{E}\bigg[|\triangle Z(t_i)|^2\bigg] &\le(1+Ch_n)\mathbb{E}\bigg[|\triangle Y(t_{n})|^2\bigg]+\sum_{i=0}^{n-1}Ch_i\mathbb{E}\bigg[|\triangle Y(t_{i})|^2\bigg]\\ &+C\sum_{i=0}^{n-1}\mathbb{E}\bigg[\int_{t_i}^{t_{i+1}} | Z^{\gamma_t}(s)-\hat{Z}^{\gamma_t,\pi}(t_i)|^2 \,ds\bigg]+C(1+\lVert \gamma_t \rVert_{\infty}^2)|\pi| \le C(1+\lVert{\gamma_t}\rVert_{\infty}^4)|\pi|.
			\end{aligned}
		\end{equation}
		Combining the last two inequalities with \eqref{eqn:esti_Y} and  \eqref{eqn:ZZ}, we conclude \eqref{eqn:m_infty}.

		On the other hand, observe that 
		\begin{equation}
			\begin{aligned}
				&\max_{0\le i \le n}\mathbb{E}\bigg[|Y^{\gamma_t,\pi,\infty}(t_i)-Y^{\gamma_t,\pi,m}(t_i)|^2\bigg]+\sum_{i=0}^{n-1}\mathbb{E}\bigg[|Z^{\gamma_t,\pi,\infty}(t_i)-Z^{\gamma_t,\pi,m}(t_i)|^2\bigg]h_i\le C\Big(\frac{1}{2}+C |\pi|\Big)^m
			\end{aligned}
		\end{equation}
		provided that $|\pi|$ is sufficiently small.
		This inequality can be established by an argument similar to that in \cite[Theorem 5]{forwardbsde}. 
		Combining this estimate with \eqref{eqn:m_infty}, we obtain the desired result.		
	\end{proof}

	\subsection{Main results}
	
	This section aims to prove Theorem
	\ref{thm:main} under  Assumptions \ref{asmsde} and \ref{asmbsde},
	which constitutes the main result of this paper. 
	Specifically, we verify that the Picard-type iteration 
	$(Y^{\gamma_{t},\pi,m},Z^{\gamma_{t},\pi,m})_{m\in\mathbb{N}\cup \{0\}}$
	converges  
	to the FBSDE solution $(Y^{\gamma_{t}},Z^{\gamma_{t}})$  and show that the convergence rate
	is of the order $ |\pi|+(\frac{1}{2}+C|\pi|)^m$ 
	as $|\pi|\to0,m\to\infty.$ 
	This is an 
	extension of Proposition \ref{convscheme} under weaker
	assumptions.

	In order to achieve this, we begin by showing that the functionals $b,\sigma,f,g$, which satisfy Assumptions \ref{asmsde} and \ref{asmbsde}, can be approximated by smooth counterparts $b^{\epsilon}, \sigma^{\epsilon}, f^{\epsilon}, g^{\epsilon}$ that  satisfy Assumptions \ref{asmsde}, \ref{asmbsde}, and \ref{convasm}. 
	For two partitions 
	$\hat{\pi}:=\{0=\hat{t}_0<\cdots<\hat{t}_{\hat{n}}=t\}$ of $[0,t]$
	and 
	$\pi:=\{t=t_0<\cdots<t_n=T\}$ of $[t,T]$,
	we define 
	$\omega^{\hat{\pi},\pi}(s):=\sum_{i=0}^{\hat{n}-1}\omega(\hat{t}_i)\mathbf{1}_{u\in[\hat{t}_i,\hat{t}_{i+1})}+\sum_{i=0}^{n-1}\omega(t_{i})\mathbf{1}_{s\in [t_i,t_{i+1})}+\omega(t_n)\mathbf{1}_{s=t_n}$ for $s\in [0,T]$ and
	$f^{\hat{\pi},\pi}(s,\omega,y,z):=f(s,\omega^{\hat{\pi},\pi},y,z).$
	We smooth this function $f^{\hat{\pi},\pi}$ by convolution with a mollifier to ensure Assumption \ref{convasm}.
	Define $\tilde{f}^{\hat{\pi},\pi}:[0,T]\times(\mathbb{R}^d)^{\hat{n}+1} \times (\mathbb{R}^d)^{n+1} \times \mathbb{R} \times \mathbb{R}^{\ell} \to\mathbb{R}$
	as $\tilde{f}^{\hat{\pi},\pi}(s,\hat{x}_0, \hat{x}_1, \cdots, \hat{x}_{\hat{n}}, x_0, x_1, \cdots, x_n,y,z):=f(s, \sum_{i=0}^{\hat{n}-1} \hat{x}_i \mathbf{1}_{u\in[\hat{t}_i,\hat{t}_{i+1})}+\sum_{i=0}^{n-1}x_i\mathbf{1}_{u\in [t_i,t_{i+1})}+x_n\mathbf{1}_{u=t_n},y,z)$. Clearly, $$f^{\hat{\pi},\pi}(s,\omega,y,z)=\tilde{f}^{\hat{\pi},\pi}(s,\omega(\hat{t}_0),\omega(\hat{t}_1),\cdots,\omega(\hat{t}_{\hat{n}}), \omega(t_0),\omega(t_1),\cdots,\omega(t_n),y,z)\,.$$  Let $\upsilon(x)$ denote the standard mollifier in dimension $d$, and define its scaled version by
	$\upsilon_{\epsilon}(x):=\frac{1}{\epsilon^d} \upsilon(\frac{x}{\epsilon})$.
	The mollification of $\tilde{f}^{\hat{\pi},\pi}$ is  given by
	\begin{equation} 
		\begin{aligned} &\quad \tilde{f}^{\epsilon,\hat{\pi},\pi}(s,\hat{x}_0, \hat{x}_1, \cdots, \hat{x}_{\hat{n}}, x_0, x_1, \cdots, x_n,y,z)\\ &=\int_{\mathbb{R}^d}\cdots \int_{\mathbb{R}^d} \tilde{f}^{\hat{\pi},\pi}(s,\hat{x}_0-\hat{\xi}_0,\cdots, \hat{x}_{\hat{n}}-\hat{\xi}_{\hat{n}}, x_0-\xi_0,\cdots, x_n-\xi_n,y-\xi_{n+1},z-\xi_{n+2})  \,dA^{\epsilon}(\hat{\xi},\xi)\,. 
		\end{aligned} 
	\end{equation}
	For convenience, we have used the notation 
	$$dA^{\epsilon}(\hat{\xi},\xi):=\upsilon_{\epsilon}(\hat{\xi}_0)\,d\hat{\xi}_0\, 	\upsilon_{\epsilon}(\hat{\xi}_1)\,d\hat{\xi}_1\cdots\upsilon_{\epsilon}(\hat{\xi}_{\hat{n}})\,d\hat{\xi}_{\hat{n}} \upsilon_{\epsilon}(\xi_0)\,d\xi_0\, 	\upsilon_{\epsilon}(\xi_1)\,d\xi_1\cdots\upsilon_{\epsilon}(\xi_{n+2})\,d\xi_{n+2}$$
	Finally, we define 
	$$f^{\epsilon,\hat{\pi},\pi}(s,\omega,y,z)=\tilde{f}^{\epsilon,\hat{\pi},\pi}(s,\omega(\hat{t}_0), \omega(\hat{t}_1), \cdots, \omega(\hat{t}_{\hat{n}}), \omega(t_0), \omega(t_1), \cdots, \omega(t_n),y,z)\,.$$
	An analogous construction is applied to the other functionals $b^{\epsilon,\hat{\pi},\pi},\sigma^{\epsilon,\hat{\pi},\pi},$ and $g^{\epsilon,\hat{\pi},\pi}$.

	The following lemma state crucial properties of these non-anticipative functionals $b^{\epsilon,\hat{\pi},\pi},$ $ \sigma^{\epsilon,\hat{\pi},\pi},$ $ f^{\epsilon,\hat{\pi},\pi},$ $ g^{\epsilon,\hat{\pi},\pi}$.
	For given $\gamma,\pi,t,$ and $T$, define $D^{\gamma_{t},\pi}([0,T],\mathbb{R}^d)$ as
	\begin{equation}
		\begin{aligned}
			D^{\gamma_{t},\pi}([0,T],\mathbb{R}^d)=\{ \omega \in D([0,T],\mathbb{R}^{d})| \,& \omega(s)=\gamma(s)\, \textnormal{ for }\, 0 \le s \le t \,\textnormal{ and }\\ &\omega(s)=\omega(t_{i})\, \textnormal{ for }\, t_{i}\le s < t_{i+1}, \, i=0,1,\cdots, n-1 \}.
		\end{aligned}
	\end{equation}
	In the following lemma, $K_1$ and $K_2$ are the constants in Assumptions \ref{asmsde} and \ref{asmbsde}.

	\begin{lemma}\label{smoothapprox}
		The non-anticipative functionals $b^{\epsilon,\hat{\pi},\pi}, \sigma^{\epsilon,\hat{\pi},\pi}, f^{\epsilon,\hat{\pi},\pi}, g^{\epsilon,\hat{\pi},\pi}$  constructed above satisfy   Assumptions \ref{asmsde}, \ref{asmbsde}, \ref{convasm}.
		Moreover, if	  $\lVert \gamma_t - \gamma_t^{\hat{\pi}}\rVert_{\infty}<\sqrt{|\pi|}$, then 
		\begin{equation}
			\begin{aligned} 
				&|b(s,\omega)-b^{\epsilon,\hat{\pi},\pi}(s,\omega)|+\lVert\sigma(s,\omega)-\sigma^{\epsilon,\hat{\pi},\pi}(s,\omega)\rVert +	|g(\omega)-g^{\epsilon,\hat{\pi},\pi}(\omega)|+|f(s,\omega,y,z)-f^{\epsilon,\hat{\pi},\pi}(s,\omega,y,z)|\\
				\le\,& 8\epsilon  K_1+4 K_1\sqrt{|\pi|}
			\end{aligned}
		\end{equation}
		for all $s\in[0,T]$, $\omega \in D^{\gamma_{t},\pi}([0,T],\mathbb{R}^d),$ $y\in\mathbb{R}$ and $z\in \mathbb{R}^\ell.$
	\end{lemma}
	\begin{proof}
		We prove this Lemma only for $f^{\epsilon,\hat{\pi},\pi}$; the similar arguments apply analogously to the other functionals. For notational simplicity, we assume $d=\ell=1$. First, we verify that  $f^{\epsilon,\hat{\pi},\pi}$ satisfies Assumption \ref{asmbsde}. Observe that for $t_j\le s <t_{j+1}$ and $t_k \le \tilde{s}<t_{k+1}$, we have 
		\begin{equation}
			\begin{aligned}
				&\quad\; |f^{\epsilon,\hat{\pi},\pi}(s,\omega,y,z)-f^{\epsilon,\hat{\pi},\pi}(\tilde{s}, \tilde{\omega},\tilde{y},\tilde{z})|\\
				&\le \int_{\mathbb{R}^d}\cdots \int_{\mathbb{R}^d} |\tilde{f}^{\hat{\pi},\pi}(s,\omega(\hat{t}_0)-\hat{\xi}_0,\cdots, \omega(\hat{t}_{\hat{n}})-\hat{\xi}_{\hat{n}}, \omega(t_0)-\xi_0,\cdots, \omega(t_n)-\xi_n,y-\xi_{n+1},z-\xi_{n+2})\\
				&\quad-\tilde{f}^{\hat{\pi},\pi}(\tilde{s},\tilde{\omega}(\hat{t}_0)-\hat{\xi}_0,\cdots, \tilde{\omega}(\hat{t}_{\hat{n}})-\hat{\xi}_{\hat{n}}, \tilde{\omega}(t_0)-\xi_0,\cdots, \tilde{\omega}(t_n)-\xi_n,\tilde{y}-\xi_{n+1},\tilde{z}-\xi_{n+2})|\,dA^{\epsilon}(\hat{\xi},\xi) \\
				&\le\int_{\mathbb{R}^d}\!\cdots\!  \int_{\mathbb{R}^d}   K_1(\sqrt{|s-\tilde{s}|}+\max(|\omega(\hat{t}_0)-\tilde{\omega}(\hat{t}_0)|,\cdots,|\omega(\hat{t}_{\hat{n}})-\tilde{\omega}(\hat{t}_{\hat{n}})|,|\omega(t_{0\wedge j})-\tilde{\omega}(t_{0\wedge k})|,\cdots,|\omega(t_{n\wedge j})-\tilde{\omega}(t_{n\wedge k})|)\\& \quad +|y-\tilde{y}|+|z-\tilde{z}|)  \,dA^{\epsilon}(\hat{\xi},\xi)
				\leq K_1(\sqrt{|s-\tilde{s}|}+\lVert \omega_s -\tilde{\omega}_{\tilde{s}} \rVert_{\infty}+|y-\tilde{y}|+|z-\tilde{z}|)\,.
			\end{aligned}
		\end{equation}
		Similar inequalities also hold for the cases with $s \le t\le \tilde{s}$, $\tilde{s} \le t\le s$ and $s,\tilde{s} \le t$. The boundedness condition $\sup_{s\in[0,T]} |f^{\epsilon,\hat{\pi},\pi}(s,\bar{0},0,0)|$ can be derived in the same manner and is therefore omitted.

		Assumption \ref{convasm} is straightforward to verify. Indeed,   
		the
		Fr$\acute{e}$chet derivatives of 	$f^{\epsilon,\hat{\pi},\pi}$  coincide
		the usual derivatives of $\tilde{f}^{\epsilon,\hat{\pi},\pi}$. Moreover, the boundedness and uniform continuity of these derivatives immediately follow by the Lipschitz continuity of $\tilde{f}^{\epsilon,\hat{\pi},\pi}$ and $\frac{d}{dx} (f*g)=f*\frac{d}{dx}(g)$.

		Finally, for all $\omega \in D^{\gamma_t,\pi}([0,T],\mathbb{R}^d)$, $y\in \mathbb{R}$, and $z\in \mathbb{R}^{\ell}$, we have
		\begin{equation}
			\begin{aligned}
				&\quad\; |f(s,\omega,y,z)-f^{\epsilon,\hat{\pi},\pi}(s,\omega,y,z)| \\
				&\le \int_{\mathbb{R}^d}\cdots \int_{\mathbb{R}^d} |\tilde{f}^{\hat{\pi},\pi}(s,\gamma(\hat{t}_0)-\hat{\xi}_0,\cdots, \gamma(\hat{t}_{\hat{n}})-\hat{\xi}_{\hat{n}}, \omega(t_0)-\xi_0,\cdots, \omega(t_n)-\xi_n,y-\xi_{n+1},z-\xi_{n+2}) \\
				&-f(s,\gamma(u)\mathbf{1}_{u<t}+\sum_{i=0}^{n-1}\omega(t_i)\mathbf{1}_{u\in [t_i,t_{i+1})}+\omega(t_n)\mathbf{1}_{u=t_n},y,z))|\,dA^{\epsilon}(\hat{\xi},\xi)\\
				& \le K_1\big(\lVert \gamma - \gamma^{\pi}\rVert_{\infty}+\int_{\mathbb{R}^d}\cdots \int_{\mathbb{R}^d} (\max_{0\le i \le \hat{n}} |\hat{\xi}_i|+\max_{0\le i \le {n+2}} |\xi_i|)\,dA^{\epsilon}(\hat{\xi},\xi) \big) \le K_1\sqrt{|\pi|}+2K_1\epsilon.
			\end{aligned}
		\end{equation}
		This completes the proof.
	\end{proof}

	We denote $b^{\epsilon,\hat{\pi},\pi}, \sigma^{\epsilon,\hat{\pi},\pi}, f^{\epsilon,\hat{\pi},\pi}, g^{\epsilon,\hat{\pi},\pi}$ simply by $b^{\epsilon},\sigma^{\epsilon},f^{\epsilon}, g^{\epsilon}$, respectively. We consider a system of SDE 
	\begin{equation} 
		\begin{aligned} \label{epsilsde}
			& X^{\gamma_{t},\epsilon}(s)=\gamma_{t}(t)+\int_{t}^{s} b^{\epsilon}(u,X_{u}^{\gamma_{t},\epsilon})\,du+ \int_{t}^{s} \sigma^{\epsilon}(u,X_{u}^{\gamma_{t},\epsilon})\,dW(u)\,, \;  t\le s \le T \,,\\
			& X^{\gamma_{t},\epsilon}(s)=\gamma_{t}(s)\,, \;  0 \le s \le t		 
		\end{aligned}
	\end{equation}
	and   BSDE
	\begin{equation} \label{epsilbsde}
		Y^{\gamma_{t},\epsilon}(s)=g^{\epsilon}(X_{T}^{\gamma_{t},\epsilon})+\int_{s}^{T} f^{\epsilon}(u,X_{u}^{\gamma_{t},\epsilon},Y^{\gamma_{t},\epsilon}(u),Z^{\gamma_{t},\epsilon}(u))\,du-\int_{s}^{T} Z^{\gamma_{t},\epsilon}(u)\,dW(u)\,, \;  t\le s\le T\,.
	\end{equation}

	\begin{prop} \label{epnbsde}
		Let Assumptions \ref{asmsde} and \ref{asmbsde} hold. Then, for any partition $\pi$ of $[t,T]$, there exists a constant $C>0$ such that
		\begin{equation}\label{eqn:xepsesti}
			\max_{0\le i\le n} \mathbb{E}\bigg[ \sup_{t_{i} \le s \le t_{i+1}} | X^{\gamma_{t}}(s)-X^{\gamma_{t},\epsilon}(s)|^2\bigg]\le C((1+\lVert \gamma_t \rVert_{\infty}^2)|\pi|+\epsilon^2) 
		\end{equation} and
		\begin{equation}\label{eqn:yzepsesti}
			\sup_{t\le s \le T}\mathbb{E}\bigg[ |Y^{\gamma_{t}}(s)-Y^{\gamma_{t},\epsilon}(s)|^2\bigg]  +\sum_{i=0}^{n-1}\mathbb{E}\bigg[\int_{t_i}^{t_{i+1}}|Z^{\gamma_{t}}(s)-Z^{\gamma_{t},\epsilon}(s)|^2\, ds \bigg]\le C((1+\lVert \gamma_t \rVert_{\infty}^2)|\pi|+\epsilon^2)	
		\end{equation}
		for all $\epsilon>0$, partition $\hat{\pi}$ of $[0,t]$ and $\gamma\in D([0,T],\mathbb{R}^d)$ with $\lVert \gamma_t - \gamma_t^{\hat{\pi}}\rVert_{\infty}<\sqrt{|\pi|}$. 
	\end{prop}
	\begin{proof}

	We restrict our attention to proving \eqref{eqn:yzepsesti}.	
		In this proof, $C$ is a generic constant  possibly varying from line to line.
		Let $X^{\gamma_{t},\pi,\epsilon}$ be the Euler scheme for the SDE \eqref{epsilsde} and let $(Y^{\gamma_t,\pi,\epsilon},Z^{\gamma_t,\pi,\epsilon})$ be a solution to \eqref{epnbsde} with  $X^{\gamma_t,\epsilon}$ replaced with $X^{\gamma_t,\pi,\epsilon}$. We first show that there is a constant $C>0$ such that
		\begin{equation}
			\begin{aligned}
				&\sup_{t\le s \le T}\mathbb{E}\bigg[ |Y^{\gamma_{t},\pi}(s)-Y^{\gamma_{t},\pi,\epsilon}(s)|^2\bigg]+\sum_{i=0}^{n-1}\mathbb{E}\bigg[\int_{t_i}^{t_{i+1}}|Z^{\gamma_{t},\pi}(s)-Z^{\gamma_{t},\pi,\epsilon}(s)|^2\, ds \bigg]\,\\
				&\le C((1+\lVert \gamma_t \rVert_{\infty}^2)|\pi|+\epsilon^2)
			\end{aligned}
		\end{equation}
		for any partition $\hat{\pi}$ of $[0,t]$ and $\gamma\in D([0,T],\mathbb{R}^d)$ with $\lVert \gamma_t - \gamma_t^{\hat{\pi}}\rVert_{\infty}<\sqrt{|\pi|}$. 
		Once this is proven, by combining this inequality with \eqref{eqn:bsde}, we obtain \eqref{eqn:yzepsesti}.

		Define $\triangle Y=Y^{\gamma_{t},\pi}-Y^{\gamma_{t},\pi,\epsilon}$ and $\triangle Z=Z^{\gamma_{t},\pi}-Z^{\gamma_{t},\pi,\epsilon}$. Then,
		\begin{equation}
			\begin{aligned}
				\triangle Y(s) &= g(X_T^{\gamma_{t},\pi})-g^\epsilon(X_T^{\gamma_{t},\pi,\epsilon})+\int_s^T (f(u,X_{u}^{\gamma_{t},\pi},Y^{\gamma_{t},\pi}(u),Z^{\gamma_{t},\pi}(u))\\& - f^{\epsilon}(u,X_{u}^{\gamma_{t},\pi,\,\epsilon},Y^{\gamma_{t},\pi,\epsilon}(u), Z^{\gamma_{t},\pi,\epsilon}(u)))\,du+\int_s^T \triangle Z(u)\,dW(u).
			\end{aligned}
		\end{equation}
		By Assumption \ref{asmbsde} and Lemma \ref{smoothapprox},
		\begin{equation}
			\begin{aligned} 
				|g(X_T^{\gamma_{t},\pi})-g^\epsilon(X_T^{\gamma_{t},\pi,\,\epsilon})|
				&\leq |g(X_T^{\gamma_{t},\pi})-g(X_T^{\gamma_{t},\pi,\epsilon})|+|g(X_T^{\gamma_{t},\pi,\epsilon})-g^\epsilon(X_T^{\gamma_{t},\pi,\epsilon})| \\
				&\le K_1 \max_{0\le i\le n}|X^{\gamma_{t},\pi}(t_i)-X^{\gamma_{t},\pi,\,\epsilon}(t_i)|+C(\epsilon+\sqrt{|\pi|})\, .
			\end{aligned}
		\end{equation}
		Moreover, 
		\begin{equation}
			\begin{aligned}
				&\quad f(u,X_{u}^{\gamma_{t},\pi},Y^{\gamma_{t},\pi}(u),Z^{\gamma_{t},\pi}(u)) - f^{\epsilon}(u,X_{u}^{\gamma_{t},\pi,\epsilon},Y^{\gamma_{t},\pi,\epsilon}(u),Z^{\gamma_{t},\pi,\epsilon}(u))\\
				&=f(u,X_{u}^{\gamma_{t},\pi},Y^{\gamma_{t},\pi,\,\epsilon}(u),Z^{\gamma_{t},\pi,\epsilon}(u)) - f^{\epsilon}(u,X_{u}^{\gamma_{t},\pi,\epsilon},Y^{\gamma_{t},\pi,\epsilon}(u),Z^{\gamma_{t},\pi,\,\epsilon}(u))\\
				&\quad +\alpha_u\triangle Y(u)+\beta_u\triangle Z(u)\,,\;t\le u \le T 
			\end{aligned}
		\end{equation}
		for some processes  $(\alpha_u)_{t\le u\le T}$ and $(\beta_u)_{t\le u\le T}$, which are bounded by $K_1$. It follows that
		\begin{equation}
			\begin{aligned} 
				&\quad	|f(u,X_{u}^{\gamma_{t},\pi},Y^{\gamma_{t},\pi,\epsilon}(u),Z^{\gamma_{t},\pi,\epsilon}(u)) - f^{\epsilon}(u,X_{u}^{\gamma_{t},\pi,\epsilon},Y^{\gamma_{t},\pi,\epsilon}(u),Z^{\gamma_{t},\pi,\epsilon}(u))| \\
				&\le C(\epsilon+\sqrt{|\pi|})+K_1 \max_{0\le i\le n}|X^{\gamma_{t},\pi}(t_i)-X^{\gamma_{t},\pi,\epsilon}(t_i)|\,.		
			\end{aligned}
		\end{equation}
		The desired result follows from standard BSDE arguments, completing the proof.
	\end{proof}

	We now state the convergence estimation of the scheme  in \eqref{scheme} and 
	provide an error bound.
	We highlight that  Assumption \ref{convasm} is not necessary for the convergence. 
	The following theorem is one of main results of this paper.
	\begin{thm} \label{thm:main}
		Let Assumptions \ref{asmsde} and \ref{asmbsde} hold. Then there exists a constant $C>0$ such that
		\begin{equation}
			\begin{aligned}
				&\max_{0\le i\le n-1} \mathbb{E}\bigg[\sup_{t_i\le s \le t_{i+1}} |Y^{\gamma_{t}}(s)-Y^{\gamma_t,\pi,m}(t_i)|^2\bigg]+\sum_{i=0}^{n-1} \mathbb{E}\bigg[\int_{t_i}^{t_{i+1}} | Z^{\gamma_t}(s)-Z^{\gamma_t,\pi,m}(t_i)|^2\,ds\bigg]\\
				& \le C \Big( |\pi|+\Big(\frac{1}{2}+C|\pi|\Big)^m\Big)
			\end{aligned}        
		\end{equation}
		provided  $| \pi |$ is sufficiently small.
	\end{thm}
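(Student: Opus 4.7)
The proof will reduce the weak-assumption case to the smooth case established in Proposition \ref{convscheme} by way of the mollified coefficients constructed in Lemma \ref{coefext}, together with the stability estimates of Propositions \ref{epnsde} and \ref{epnbsde}. First I apply Lemma \ref{coefext} to build coefficients $b^{\epsilon},\sigma^{\epsilon},f^{\epsilon},g^{\epsilon}$ that satisfy Assumptions \ref{asmsde}, \ref{asmbsde} and \ref{convasm} simultaneously, choosing the mollification parameters so that $\phi(|\pi|),\psi(|\pi|)=O(\sqrt{|\pi|})$. Let $(Y^{\gamma_t,\epsilon},Z^{\gamma_t,\epsilon})$ denote the continuous FBSDE solution and $(Y^{\gamma_t,\pi,m,\epsilon},Z^{\gamma_t,\pi,m,\epsilon})$ the Picard iterates \eqref{scheme} both built from the smoothed data. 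The plan is to insert these two auxiliary objects and use the triangle inequality
\[
Y^{\gamma_t}(s)-Y^{\gamma_t,\pi,m}(t_i) = \bigl(Y^{\gamma_t}(s)-Y^{\gamma_t,\epsilon}(s)\bigr) + \bigl(Y^{\gamma_t,\epsilon}(s)-Y^{\gamma_t,\pi,m,\epsilon}(t_i)\bigr) + \bigl(Y^{\gamma_t,\pi,m,\epsilon}(t_i)-Y^{\gamma_t,\pi,m}(t_i)\bigr),
\]
and analogously for the $Z$-component, then control each of the three differences in squared $L^2$.

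For the first difference I chain through the Euler approximations: $Y^{\gamma_t}\to Y^{\gamma_t,\pi}$ by Proposition \ref{prop:bsde}, $Y^{\gamma_t,\pi}\to Y^{\gamma_t,\pi,\epsilon}$ by Proposition \ref{epnbsde} with the estimates $\phi(|\pi|),\psi(|\pi|)=O(\sqrt{|\pi|})$ from Lemma \ref{coefext}, and $Y^{\gamma_t,\pi,\epsilon}\to Y^{\gamma_t,\epsilon}$ by Proposition \ref{prop:bsde} applied to the smoothed coefficients; each contributes an error of order $(1+\lVert\gamma_t\rVert_\infty^2)|\pi|$. Crucially, because the smoothed coefficients share the same Lipschitz and growth constants as the originals, all implicit constants are uniform in $\epsilon$. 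For the second difference I invoke Proposition \ref{convscheme} directly on the smoothed coefficients, which obey Assumption \ref{convasm} by Lemma \ref{coefext}; this yields $C(|\pi|+(1/2+C|\pi|)^m)$.

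The third difference is the only genuinely new piece. The key observation is that the scheme \eqref{scheme} evaluates coefficients only at piecewise-constant paths in $D^{\gamma_t,\pi}([0,T],\mathbb{R}^d)$, where $|b-b^{\epsilon}|+\lVert\sigma-\sigma^{\epsilon}\rVert+|f-f^{\epsilon}|+|g-g^{\epsilon}|=O(\sqrt{|\pi|})$ by construction. I will show by induction on $m$ that
\[
\max_i \mathbb{E}\bigl[|Y^{\gamma_t,\pi,m}(t_i)-Y^{\gamma_t,\pi,m,\epsilon}(t_i)|^2\bigr]+\sum_i\mathbb{E}\bigl[|Z^{\gamma_t,\pi,m}(t_i)-Z^{\gamma_t,\pi,m,\epsilon}(t_i)|^2\bigr]h_i \le C(1+\lVert\gamma_t\rVert_\infty^2)|\pi|,
\]
uniformly in $m$. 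At the induction step, subtracting the defining conditional expectations in \eqref{scheme} produces two kinds of contributions: (a) direct coefficient differences, bounded by $\phi(|\pi|)^2+\psi(|\pi|)^2=O(|\pi|)$, together with $\mathbb{E}\lVert X^{\gamma_t,\pi}-X^{\gamma_t,\pi,\epsilon}\rVert^2=O(|\pi|)$ from Proposition \ref{epnsde}; and (b) a Lipschitz contribution from $f$ involving $(Y^{\gamma_t,\pi,m-1}-Y^{\gamma_t,\pi,m-1,\epsilon},Z^{\gamma_t,\pi,m-1}-Z^{\gamma_t,\pi,m-1,\epsilon})$ that carries the same contraction factor $\tfrac{1}{2}+C|\pi|<1$ identified in Lemma \ref{cauchysequence}. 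Summing the resulting geometric recursion in $m$ yields the uniform bound, and combining the three pieces via $(a+b+c)^2\le 3(a^2+b^2+c^2)$ produces the stated rate.

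The principal obstacle is the third step: I must verify that the contraction mechanism from Lemma \ref{cauchysequence} genuinely applies to the \emph{difference} of two Picard iterations driven by different coefficient pairs, and that the Gronwall-type constant arising from summing over $m$ stays bounded as $m\to\infty$. Once one writes out the recursion for the error at iteration $m+1$ using the Lipschitz properties of $f$ and Jensen's inequality on the conditional expectation in the definition of $Z^{\gamma_t,\pi,m+1}$, this reduces to choosing $|\pi|$ small enough that the coefficient in front of the previous-step error is strictly less than one; the geometric series then sums to a constant multiple of the $O(|\pi|)$ perturbation, uniformly in $m$.
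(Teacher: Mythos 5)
Your route is genuinely different from the paper's and can be made to work, but it is heavier. You push everything through the smoothed system: first the continuous solutions ($Y^{\gamma_t}$ vs.\ $Y^{\gamma_t,\epsilon}$), then Proposition \ref{convscheme} applied to the smoothed data, then a comparison of the two Picard schemes $(Y^{\gamma_t,\pi,m},Z^{\gamma_t,\pi,m})$ and $(Y^{\gamma_t,\pi,m,\epsilon},Z^{\gamma_t,\pi,m,\epsilon})$ that must be uniform in $m$. The paper never introduces smoothed Picard iterates at all: by Remark \ref{convremark}, Assumption \ref{convasm} enters only through the single $L^2$-regularity estimate $\sum_i\mathbb{E}\int_{t_i}^{t_{i+1}}|Z^{\gamma_t}(s)-\hat{Z}^{\gamma_t,\pi}(t_i)|^2\,ds\le C(1+\lVert\gamma_t\rVert_\infty^2)|\pi|$, so the authors prove exactly this estimate for the \emph{original} coefficients (inserting $Z^{\gamma_t,\epsilon}$ and its projection $\hat{Z}^{\gamma_t,\pi,\epsilon}$, using Theorem \ref{pieceestimation} for the smoothed data, the chain through Propositions \ref{prop:bsde}, \ref{epnsde}, \ref{epnbsde} with $\phi,\psi=O(\sqrt{|\pi|})$, and Jensen), then re-run the Gronwall argument of Lemma \ref{backschemeconv} with the original coefficients, and finally invoke Lemma \ref{cauchysequence} --- valid under Assumptions \ref{asmsde} and \ref{asmbsde} alone --- directly on the original scheme. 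What the paper's ordering buys is that the $m$-dependence is handled once and for the original scheme; your ordering forces an extra uniform-in-$m$ stability estimate for the difference of two Picard iterations driven by different coefficient pairs, which the paper never needs.

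Two caveats on your version. First, that stability estimate is the real content of your third step and you only sketch it; it should indeed follow by redoing the contraction argument behind Lemma \ref{cauchysequence} for the difference of the two iterations, with an inhomogeneity of size $\phi(|\pi|)^2+\psi(|\pi|)^2+\mathbb{E}\max_i|X^{\gamma_t,\pi}(t_i)-X^{\gamma_t,\pi,\epsilon}(t_i)|^2=O(|\pi|)$ and both iterations started at $(0,0)$, so the geometric recursion sums as you claim --- but it is an additional lemma that must actually be written out. Second, your assertion that ``all implicit constants are uniform in $\epsilon$ because the smoothed coefficients share the same Lipschitz and growth constants'' covers Propositions \ref{prop:bsde}, \ref{epnsde}, \ref{epnbsde}, but not the constant in Proposition \ref{convscheme}, which traces back through Lemma \ref{backschemeconv}, Theorem \ref{pieceestimation} and Lemma \ref{regularityDu} to the second-derivative bounds of the mollified coefficients, and those are not controlled by the Lipschitz constants (they degenerate as the mollification parameter shrinks). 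The paper is exposed to the same point when it applies Theorem \ref{pieceestimation} to the $\epsilon$-data, so this is not a defect specific to your route, but the uniformity should not be claimed on the grounds you give.
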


	\begin{proof}
		In this proof, $C$ is a generic constant  possibly varying from line to line.	Following the proof of Proposition \ref{convscheme}, it suffices to prove \eqref{eqn:esti_Y} and  \eqref{eqn:ZZ} without Assumption \ref{convasm}.	 
		For the given coefficients $b,\sigma, g, f$ satisfying Assumptions \ref{asmsde} and \ref{asmbsde}, we consider their approximations $b^{\epsilon,\hat{\pi},\pi},$ $ \sigma^{\epsilon,\hat{\pi},\pi},$ $ g^{\epsilon,\hat{\pi},\pi},$ $ f^{\epsilon,\hat{\pi},\pi}$
		given in Lemma \ref{smoothapprox},
		which we denote simply by  $b^{\epsilon},\sigma^{\epsilon},f^{\epsilon}, g^{\epsilon}$ as in \eqref{epsilsde} and \eqref{epsilbsde}.  Set $\epsilon=\sqrt{|\pi|}$. Then, \eqref{eqn:esti_Y} is obtained directly from
		\begin{equation}
			\begin{aligned}
				\int_{t_i}^{t_{i+1}} |Z^{\gamma_t}(s)|^2\,ds\le \int_{t_i}^{t_{i+1}}2|Z^{\gamma_{t}}(s)-Z^{\gamma_{t},\epsilon}(s)|^2+2|Z^{\gamma_{t},\epsilon}(s)|^2\, ds \le C(1+\lVert \gamma_t \rVert_{\infty}^2)|\pi|\,.
			\end{aligned}		
		\end{equation}
		
		We now prove \eqref{eqn:ZZ} without Assumption \ref{convasm}.	  Define $$\hat{Z}^{\gamma_t,\pi}(t_i):=\dfrac{1}{h_i} \mathbb{E}\bigg[\displaystyle\int_{t_i}^{t_{i+1}} Z^{\gamma_t}(s)\, ds \bigg| \mathcal{F}_{t_i}\bigg]\,,\;\hat{Z}^{\gamma_t,\pi,\epsilon}(t_i):=\dfrac{1}{h_i} \mathbb{E}\bigg[\displaystyle\int_{t_i}^{t_{i+1}} Z^{\gamma_t,\epsilon}(s)\, ds \bigg| \mathcal{F}_{t_i}\bigg]\,.$$ 
	Then
		\begin{equation}
			\begin{aligned}
				&\quad\sum_{i=0}^{n-1} \mathbb{E}\bigg[\int_{t_i}^{t_{i+1}} | \hat{Z}^{\gamma_t,\pi,\epsilon}(t_i)-\hat{Z}^{\gamma_t,\pi}(t_i)|^2\,ds\bigg] \le  \sum_{i=0}^{n-1} \dfrac{1}{h_i} \mathbb{E}\bigg[ \left| \mathbb{E}\bigg[\int_{t_i}^{t_{i+1}} Z^{\gamma_t}(s)-Z^{\gamma_t,\epsilon}(s) \,ds \bigg| \mathcal{F}_{t_i}\bigg] \right|^2\bigg] \\
				&\le \sum_{i=0}^{n-1} \mathbb{E}\bigg[\int_{t_i}^{t_{i+1}} |Z^{\gamma_t}(s)-Z^{\gamma_t,\epsilon}(s)|^2\, ds\bigg]= \mathbb{E}\bigg[\int_{t}^{T} |Z^{\gamma_t}(s)-Z^{\gamma_t,\epsilon}(s)|^2\, ds\bigg] \le C(1+\lVert{\gamma_t}\rVert_{\infty}^2)|\pi| 
			\end{aligned} 	
		\end{equation}
		for some constant $C>0$ by  Proposition \ref{epnbsde}.	Since the approximations $b^{\epsilon,\hat{\pi},\pi},$ $ \sigma^{\epsilon,\hat{\pi},\pi},$ $ g^{\epsilon,\hat{\pi},\pi},$ $ f^{\epsilon,\hat{\pi},\pi}$ satisfy Assumption \ref{convasm},	by Proposition \ref{convscheme}, we have
		$$\sum_{i=0}^{n-1} \mathbb{E}\bigg[\int_{t_i}^{t_{i+1}} | Z^{\gamma_t,\epsilon}(s)-\hat{Z}^{\gamma_t,\pi,\epsilon}(t_i)|^2\,ds\bigg]\le C(1+\lVert{\gamma_t}\rVert_{\infty}^4)|\pi|$$ for some constant $C>0.$
		Therefore,
		\begin{equation}
			\begin{aligned}
				&\quad\sum_{i=0}^{n-1} \mathbb{E}\bigg[\int_{t_i}^{t_{i+1}} | Z^{\gamma_t}(s)-\hat{Z}^{\gamma_t,\pi}(t_i)|^2\,ds\bigg]\\
				& \le C\bigg(\sum_{i=0}^{n-1} \mathbb{E}\bigg[\int_{t_i}^{t_{i+1}} | Z^{\gamma_t}(s)-Z^{\gamma_t,\epsilon}(s)|^2\,ds\bigg]+\sum_{i=0}^{n-1} \mathbb{E}\bigg[\int_{t_i}^{t_{i+1}} | Z^{\gamma_t,\epsilon}(s)-\hat{Z}^{\gamma_t,\pi,\epsilon}(t_i)|^2\,ds\bigg]\\
				&\quad+\sum_{i=0}^{n-1} \mathbb{E}\bigg[\int_{t_i}^{t_{i+1}} | \hat{Z}^{\gamma_t,\pi,\epsilon}(t_i)-\hat{Z}^{\gamma_t,\pi}(t_i)|^2\,ds\bigg]\bigg)\\
				& \le C(1+\lVert{\gamma_t}\rVert_{\infty}^2)|\pi|+C(1+\lVert{\gamma_t}\rVert_{\infty}^4)|\pi|+C(1+\lVert{\gamma_t}\rVert_{\infty}^2)|\pi| \le C(1+\lVert{\gamma_t}\rVert_{\infty}^4)|\pi|.
			\end{aligned}
		\end{equation}
		This completes the proof.	
	\end{proof}

	\section{Second-order parabolic path-dependent PDEs}
	\label{sec:PPDE}
	In this section, we establish a numerical scheme for PPDEs.
	Consider a second-order parabolic PPDE
	\begin{equation} \label{ppde}
		\begin{aligned}
			&\partial_{s}u+\frac{1}{2}\textnormal{Tr}(\sigma\sigma^{T}(s,\omega)\partial_{\omega\omega}^2 u) +b(s,\omega)\cdot\partial_{\omega} u +f(s,\omega,u,\partial_{\omega} u)=0 \,,\;0\le s<T\\
			&u(T,\omega)=g(\omega)
		\end{aligned}
	\end{equation}
	A solution to this PPDE is closely related to that of the FBSDE.
	Refer to \citet{pathkac} and \citet{functionalito} for the proof of the following proposition
	.
	
	\begin{prop} \label{uniclassical}
		Suppose Assumptions \ref{asmsde} and \ref{asmbsde} hold.   Denote by  $(Y^{\gamma_t},\,Z^{\gamma_t})$   the unique solution to the  FBSDE \eqref{eqn:pbsde}.
		
		\begin{enumerate}[(i)]
			\item If $u\in C_{loc}^{1,2}(\Lambda_T)$ is a solution to the PPDE \eqref{ppde} such that $u(t,\gamma)$ and $\nabla_{\omega}u(t,\gamma)$ have polynomial growth in $\gamma$ uniformly in $t$, then $(Y^{\gamma_t}(s),Z^{\gamma_t}(s))=(u(s,X_s^{\gamma_t}),\nabla_{\omega}u(s,X^{\gamma_t}_s))$ for all $s\ge t.$ 
			\item Suppose further that Assumption \ref{convasm} hold. Then, the PPDE has a unique classical solution $u$ (\citet[Definition 2.5]{pathkac}). Moreover, $u\in C_{loc}^{1,2}(\Lambda_T)$ and  $u(t,\gamma)=Y^{\gamma_t}(t).$
		\end{enumerate} 
	\end{prop}

	This proposition establishes that the unique solution $u$ of the PPDE \eqref{ppde} can be obtained via the FBSDE \eqref{eqn:pbsde}. In particular, the value $u(t,\gamma)$ can be computed numerically through our discretization scheme. A limitation of this approach is that it delivers the value of $u(t,\gamma)$ only for a given initial path $(t,\gamma)$; for distinct initial paths, the scheme must be repeated, thereby increasing the computational burden. To address this drawback, we construct a neural network within a supervised learning framework.

	Let $L, d_0, d_1,\cdots, d_L\in\mathbb{N}$ and $\sigma: \mathbb{R}\to\mathbb{R}$  be an activation function. For any $\ell=
	1,\cdots,L,$ let $W_\ell: \mathbb{R}^{d_{\ell-1}}\to \mathbb{R}^{d_{\ell}}$
	be an affine function. A function $\mathcal{N} : \mathbb{R}^{d_{0}} \to \mathbb{R}^{d_{L}}$
	defined as
	$$\mathcal{N}=W_L\circ F_{L-1}\circ \cdots \circ F_1 \textnormal{ with } F_\ell=\sigma\circ W_\ell \textnormal{ for } \ell=1,2,\cdots,L-1$$
	is called a neural network. 
	The activation function $\sigma$ is
	applied componentwise. Here, 
	$L$ denotes the number of layers, $d_1,\cdots, d_{L-1}$ denote the dimensions of the hidden layers and $d_0,d_L$ represent the dimensions of the input and output layers, respectively.
	Each affine function   $W_\ell$
	is of the form
	$W_\ell(x) =
	A^\ell x + b^\ell$
	for some matrix $A^\ell\in \mathbb{R}^{d_\ell\times d_{\ell-1}}$ and vector $b
	\in \mathbb{R}^{d_\ell} $. 
	We let  $\theta=(A^1,\cdots,A^L,b^1,\cdots,b^L)$
	denote the collection of parameters of the network. To emphasize the dependence on these parameters, we may write the neural network as   $\mathcal{N}_\theta$.

	\begin{thm} 
		Suppose  $u\in C_{loc}^{1,2}(\Lambda_T)$ and $K$ is a compact subset of $C([0,T],\mathbb{R}^d)$. For any $\epsilon>0$,  there are a  time partition
		$\{0=t_1<t_2<\ldots <t_n=T\}$ and a neutral network $\mathcal{N}_\theta: \mathbb{R}\times\mathbb{R}^{d\times n}\to\mathbb{R} $ such that $$|u(t,\gamma)-\mathcal{N}_\theta(t,\gamma(t_1),\cdots,\gamma(t_n))|\le \epsilon $$
		for all $(t,\gamma)\in [0,T] \times K.$
	\end{thm}
	
	\begin{proof} We first construct the desired  partition of the interval $[0,T]$. 
		We recall that there is a constant $L>0$ such that 
		$|u(t,\gamma)-u(t',\gamma')|\leq L|\!|\gamma_t-\gamma_t'|\!|_\infty$ for all $(t,\gamma),(t',\gamma')\in \Lambda_T$.	
		Since $K$ is compact, it is bounded and equicontinuous. For given $\epsilon>0,$ there is a partition $\pi=	\{0=t_1<t_2<\ldots <t_n=T\}$ such that $|\!|\gamma-\gamma^\pi|\!|_{\infty}<\frac{\epsilon}{2L}$ for all $\gamma\in K$.
		where $\gamma^\pi$ is the piecewise linear path interpolating $(t_1,\gamma(t_1)),\cdots,(t_n,\gamma(t_n))$.
		
		We now construct the desired neural network. 	Let $R:=\sup_{f\in K}|\!|f|\!|_\infty.$ 
		For $x=(x_1,\cdots,x_n)\in \mathbb{R}^{d\times n}$, we denote as $\eta^x$ the piecewise linear path interpolating $(t_1,x_1),\cdots,(t_n,x_n)$. Consider the map
		$F:[0,T]\times \mathbb{R}^{d\times n}\to \mathbb{R}$
		given as $(t,x)\mapsto u(t,\eta^x).$
		Since $F$ 	is continuous and $[0,T]\times [-R,R]^{d\times n}$ is compact, by universal approximation theorem for neural network in \cite[Theorem 2.1]{universialapp},
		there is a neutral network $\mathcal{N}_\theta:[0,T]\times \mathbb{R}^{d\times n}\to\mathbb{R} $  such that
		$|u(t,\eta^x)-\mathcal{N}(t,x)|<\frac{\epsilon}{2}$
		for all $(t,x)\in [0,T]\times [-R,R]^{d\times n}$.
		For any $(t,\gamma) \in [0,T]\times K,$ let $x=(\gamma(t_1),\cdots,\gamma(t_n))$ then  $\gamma^\pi=\eta^x.$
		It follows that \begin{equation}
			\begin{aligned}
				\quad\;|u(t,\gamma)-\mathcal{N}_\theta(t,\gamma(t_1),\cdots,\gamma(t_n))|
				&\le|u(t,\gamma)-u(t,\gamma^\pi)|+|u(t,\eta^x)-\mathcal{N}_\theta(t,x)|\le \frac{\epsilon}{2L}L+\frac{\epsilon}{2}=\epsilon \,.
			\end{aligned}
		\end{equation}	
		This completes the proof.
	\end{proof}

	One of the key ideas is to use our discretization scheme to generate training data for the supervised learning of a neural network.
	By directly mapping inputs to known outputs, this approach provides neural network that are both interpretable and reliable, making it a preferred method in solving PPDEs.	
	Let $K$ be a compact subset of  $C([0,T],\mathbb{R}^d)$ and set $R:=\sup_{f\in K}|\!|f|\!|_\infty$. Fix a time partition	$\{0=t_1<t_2<\ldots <t_n=T\}$. Select points
	$t_{(1)},\cdots,t_{(I)}\in [0,T]$ 
	and
	$x_{(1)},\cdots, x_{(J)}\in \mathbb{R}^{d\times n}$. 
	Using our scheme, we compute numerical solutions $u(t_{(i)},\eta^{x_{(j)}})$ for $i=1,\ldots,I$ and $j=1,\ldots,J$, which serve as training data. The network is then trained by minimizing
	$$\sum_{i=1}^I\sum_{j=1}^J|u(t_{(i)},\eta^{x_{(j)}})-\mathcal{N}_\theta(t_{(i)},x_{(j)})|^2\,.$$
	Once trained, the network can instantly provide an approximate solution $u(t,\gamma)$ for any initial path $(t,\gamma)\in[0,T]\times K$. In this way, the neural network approach overcomes the drawback of our direct method when applied to PPDEs.

	\section{Error analysis for the martingale integrand}
	\label{sec:error}
	
	This section conducts an error analysis for the Monte Carlo estimator in \eqref{scheme}.
	Theorem \ref{thm_con_rate}
	demonstrates the
	concentration inequalities that quantify
	the statistical error of the estimator. 	The following lemma can be found in the proof of Theorem 6.2.1 in  \cite{montesimbook}.

	\begin{lemma}\label{eqn:expo_inequ}
		Let $Y$ be an $\ell$-dimensional standard normal random variable, and $h: \mathbb{R}^{\ell} \mapsto \mathbb{R}$ be a Lipschitz continuous function with Lipschitz constant $|h|_{Lip}$. Then, 		$\mathbb{E}[ e^{\lambda (h(Y)-E[h(Y)])}] \le e^{\frac{1}{2} |h|_{Lip}^2\lambda^2}$ for all $\lambda \in \mathbb{R}$.
	\end{lemma}

	\begin{lemma} \label{meanerrorlemma}
		Let $Y=(Y_1,\cdots,\,Y_{\ell})$ be an $\ell$-dimensional standard normal random variable and $h: \mathbb{R}^{\ell} \mapsto \mathbb{R}$ be a Lipschitz continuous function. Then, there exist positive constants $c,\delta$ such that   $\mathbb{E}[ e^{\lambda (Y_1 h(Y)-E[Y_1 h(Y)])}] \le e^{c\lambda^2}$ for all $\lambda \in(-\delta,\delta)$.
	\end{lemma}
	\begin{proof}
		Denote by $|h|_{Lip}$ the Lipschitz constant of $h$ and assume $|h|_{Lip}\ne 0$. Define $H(\lambda):=\mathbb{E}[ e^{\lambda Y_1 h(Y)}]$. Then, it can be easily verified that $H(0)=1$, $H(\cdot)>0$,
		$H$ is smooth on  $|\lambda|< \frac{1}{2|h|_{Lip}}$, and 
		$H'(0)=\mathbb{E}[Y_1 h(Y)].$
		It is enough to show that there exist positive constants $c,\delta$ such that
		$$I(\lambda):=\dfrac{e^{c\lambda^2+\lambda \mathbb{E}[Y_1 h(Y)]}}{H(\lambda)}\ge 1$$ 
		for $|\lambda|<\delta$. 
		Because $I(0)=1,$ this can be achieved by showing that 
		$I$ has a local minimum at $\lambda=0.$ 
		We claim that 
		$I'(\lambda)>0$ for $0<\lambda<\delta$ and $I'(\lambda)<0$ for $-\delta<\lambda<0$ for some $\delta>0.$
		A direct calculation gives
		\begin{equation}
			\begin{aligned}
				I'(\lambda)& = \dfrac{(2c\lambda+\mathbb{E}[Y_1 h(Y)])e^{c\lambda^2+\lambda \mathbb{E}[Y_1 h(Y)]}H(\lambda)-e^{c\lambda^2+\lambda \mathbb{E}[Y_1 h(Y)]}H'(\lambda)}{H(\lambda)^2} \\
				&=\frac{e^{c\lambda^2+\lambda \mathbb{E}[Y_1 h(Y)]}}{H(\lambda)}\bigg(2c\lambda+\mathbb{E}[Y_1 h(Y)]-\dfrac{H'(\lambda)}{H(\lambda)}\bigg).
			\end{aligned}
		\end{equation}  
		Because, $\frac{e^{c\lambda^2+\lambda \mathbb{E}[Y_1 h(Y)]}}{H(\lambda)}$ is positive,
		the signs of $I'(\lambda)$ and $2c\lambda+\mathbb{E}[Y_1 h(Y)]-\frac{H'(\lambda)}{H(\lambda)}$ are the same. The Taylor expansion of $\frac{H'(\lambda)}{H(\lambda)}$ at $\lambda=0$ is
		$$\dfrac{H'(\lambda)}{H(\lambda)}= \mathbb{E}[Y_1 h(Y)]+\Big(\mathbb{E}[Y_1^2h(Y)^2]-\mathbb{E}[Y_1 h(Y)]^2\Big)\lambda+o(\lambda^2).$$
		By choosing $c>0$ satisfying $2c>\mathbb{E}[Y_1 ^2h(Y)^2]-\mathbb{E}[Y_1 h(Y)]^2$ and $\delta>0$ sufficiently small, we obtain the desired result. 
	\end{proof}

	\begin{lemma} \label{liplem}
		Suppose Assumptions \ref{asmsde} and \ref{asmbsde} hold, and $\sigma$ is uniformly bounded.
		Given  $t,\gamma,\pi,m,i,$ and $\omega,$ we define a map $\Phi:D([0,T],\mathbb{R}^\ell)\to \mathbb{R}$ as 
		\begin{equation}\label{eqn:Phi}
			\begin{aligned}
				\Phi(\eta):=& \,g(X_{T}^{\gamma_{t},\pi}(\omega \mathop{\oplus}_{t_i}\eta)) \\ &+\sum_{j=i}^{n-1} f(t_j,X_{t_j}^{\gamma_{t},\pi}(\omega \mathop{\oplus}_{t_i} \eta),Y^{\gamma_{t},\pi,m}(t_j,\omega \mathop{\oplus}_{t_i} \eta),Z^{\gamma_{t},\pi,m}(t_j,\omega \mathop{\oplus}_{t_i} \eta))(t_{j+1}-t_j).
			\end{aligned}
		\end{equation}
		For
		$y\in \mathbb{R^\ell},$ $k=i+1,i+2,\cdots,n$, and
		a Brownian path $W$,  let $W^{\pi,y,k}=W^{y}$ be a process  on $[0,T]$ such that $W^{y}(s)=W(s)$ for $s\in [0,t_i]$ and 
		$W^{y}=W^{y}(s)$ is a piecewise constant path   
		for $s\in [ t_i,T]$
		satisfying $$W^{y}(t_{j+1})-W^{y}(t_j)=W(t_{j+1})-W(t_j)$$ for $j=i,\cdots,n-1$ except for $j=k-1$ and $$W^{y}(t_k)-W^{y}(t_{k-1})=y\,.$$
		Then a map $H$ defined as 
		$H(y):= \mathbb{E}\big[\Phi(W_T^{y})|W(t_i),\cdots,\, W(t_{k-1})\big]$    
		is Lipschitz continuous and the Lipschitz constant does not depend on $W(t_i),\cdots,\, W(t_{k-1})$.  \end{lemma}
	
	\begin{proof} For  simplicity, we assume $d=\ell=1$.
		Define 	
		$\delta{X}^{\gamma_{t},\pi}(s,y,y'):=X^{\gamma_{t},\pi}(s,\omega \mathop{\oplus}_{t_i} W_T^{y})-X^{\gamma_{t},\pi}(s,\omega \mathop{\oplus}_{t_i} W_T^{y'})$, $\delta{X}_s^{\gamma_{t},\pi}(y,y')=\delta{X}^{\gamma_{t},\pi}(\cdot\wedge s,y,y')$, 
		$\delta Y^{\gamma_{t},\pi,m}(s,y,y'):=Y^{\gamma_{t},\pi,m}(s,\omega \mathop{\oplus}_{t_i} W_T^{y})-Y^{\gamma_{t},\pi,m}(s,\omega \mathop{\oplus}_{t_i} W_T^{y'})$ and	$\delta Z^{\gamma_{t},\pi,m}(s,y,y'):=Z^{\gamma_{t},\pi,m}(s,\omega \mathop{\oplus}_{t_i} W_T^{y})-Z^{\gamma_{t},\pi,m}(s,\omega \mathop{\oplus}_{t_i} W_T^{y'})$	for $s\in [t,T], y,y'\in\mathbb{R}$.
		We also set	$\mathbb{E}_{t_{i},\cdots,t_{j}}[\,\cdot\,]:=\mathbb{E}[\,\cdot\,|W(t_i),\cdots,\, W(t_{j})]$
		for $j=i,\cdots,n.$	
		To prove that $H$ is Lipschitz continuous, observe that
		\begin{equation}
			\begin{aligned}
				&|H(y)-H(y')|\\  \le\;&\mathbb{E}_{t_{i},\cdots,t_{k-1}} \Big[\big|g(X_{T}^{\gamma_{t},\pi}(\omega \mathop{\oplus}_{t_i} W_T^{y}))-g(X_{T}^{\gamma_{t},\pi}(\omega \mathop{\oplus}_{t_i} W_T^{y'}))\big|\Big] \\
				&+\mathbb{E}_{t_{i},\cdots,t_{k-1}} \Big[\sum_{j=i}^{n-1} \big| f(t_j,X_{t_j}^{\gamma_{t},\pi}(\omega \mathop{\oplus}_{t_i} W_T^{y}),Y^{\gamma_{t},\pi,m}(t_j,\omega \mathop{\oplus}_{t_i} W_T^{y}),Z^{\gamma_{t},\pi,m}(t_j,\omega \mathop{\oplus}_{t_i} W_T^{y}))\\
				&- f(t_j,X_{t_j}^{\gamma_{t},\pi}(\omega \mathop{\oplus}_{t_i} W_T^{y'}),  Y^{\gamma_{t},\pi,m}(t_j,\omega \mathop{\oplus}_{t_i} W_T^{y'}),Z^{\gamma_{t},\pi,m}(t_j,\omega \mathop{\oplus}_{t_i} W_T^{y'}))\big|(t_{j+1}-t_j)\Big] \\
				\le\;& \mathbb{E}_{t_{i},\cdots,t_{k-1}} \Big[K_1(1+T)\big|\!\big|\delta{X}_T^{\gamma_{t},\pi}(y,y')\big|\!\big|_{\infty} \Big] + \mathbb{E}_{t_{i},\cdots,t_{k-1}}\Big[K_1\sum_{j=i}^{n-1} \big|\delta Y^{\gamma_{t},\pi,m}(t_j,y,y') \big|(t_{j+1}-t_j) \Big]\\
				& + \mathbb{E}_{t_{i},\cdots,t_{k-1}}\Big[K_1\sum_{j=i}^{n-1} \big|\delta Z^{\gamma_{t},\pi,m}(t_j,y,y') \big|(t_{j+1}-t_j)\Big]
				=:I_1+I_2+I_3\,.
			\end{aligned}
		\end{equation}

		We first show that  
		$$ I_1\leq K_1(1+T) \lVert \sigma \rVert_{\infty}  \big(1+K_1 |\pi| +K_1 \sqrt{|\pi|}\big)^{n-k} |y-y'| \,.$$
		As $(\omega \mathop{\oplus}_{t_i} W_T^{y})(s)=(\omega \mathop{\oplus}_{t_i} W_T^{y'})(s)$ for $0\le s\le t_{k-1},$  we know that $\delta X(s,y,y')=0$ for $0\le s\le t_{k-1}$ by the Euler scheme in \eqref{eulerscheme}.
		Thus, the mapping \begin{equation}
			\begin{aligned}
				y \mapsto X^{\gamma_{t},\pi}(t_{k},\omega \mathop{\oplus}_{t_i} W_T^{y})&=X^{\gamma_{t},\pi}(t_{k-1},\omega \mathop{\oplus}_{t_i} W_T^{y})+ b(t_{k-1},X_{t_{k-1}}^{\gamma_{t},\pi}(\omega \mathop{\oplus}_{t_i} W_T^{y}))(t_{k}-t_{k-1}) \\ 
				&\quad+\sigma(t_{k-1},X_{t_{k-1}}^{\gamma_{t},\pi}(\omega \mathop{\oplus}_{t_i} W_T^{y}))y
			\end{aligned}
		\end{equation}
		is Lipschitz continuous with Lipschitz constant $\lVert \sigma \rVert_{\infty}$, as $X_{t_{k-1}}^{\gamma_{t},\pi}(\omega \mathop{\oplus}_{t_i} W_T^{y})$ does not depend on $y$. 
		Define $\Delta W_j:=W(t_{j+1})-W(t_j)$ for $j=0,\cdots,n-1.$   
		We show that 
		\begin{equation}
			\label{eqn:aa} \big|\delta{X}^{\gamma_{t},\pi}(t_{j+1},y,y')\big|\leq\lVert \sigma \rVert_{\infty}|y-y'| \prod_{\nu=k}^{j}\big(1+K_1 |\pi| +K_1 |\Delta W_{\nu}|\big)
		\end{equation} 
		for $j=k,\cdots,n-1$  by induction on $j.$
		For $j=k,$ 
		\begin{equation}
			\begin{aligned}
				|\delta{X}^{\gamma_{t},\pi}(t_{k+1},y,y')| &\le | \delta{X}^{\gamma_{t},\pi}(t_{k},y,y')| + | (b(t_{k},X_{t_{k}}^{\gamma_{t},\pi}(\omega \mathop{\oplus}_{t_i} W_T^{y}))-b(t_{k},X_{t_{k}}^{\gamma_{t},\pi}(\omega \mathop{\oplus}_{t_i} W_T^{y'})))(t_{k+1}-t_{k})| \\
				& +| (\sigma(t_{k},X_{t_{k}}^{\gamma_{t},\pi}(\omega \mathop{\oplus}_{t_i} W_T^{y}))-\sigma(t_{k},X_{t_{k}}^{\gamma_{t},\pi}(\omega \mathop{\oplus}_{t_i} W_T^{y'})))\Delta W_{k}| \\
				&\le \; \lVert \sigma \rVert_{\infty} |y-y'|+ K_1\lVert \sigma \rVert_{\infty} |\pi| |y-y'|+K_1\lVert \sigma \rVert_{\infty}|y-y'||\Delta W_{k}|\,.
			\end{aligned}
		\end{equation}	Similarly, 
		\begin{equation}
			\begin{aligned}
				|\delta{X}^{\gamma_{t},\pi}(t_{j+1},y,y')| & \le (1+K_1|\pi|+K_1|\Delta W_{j}|)| X^{\gamma_{t},\pi}(t_{j},\omega \mathop{\oplus}_{t_i} W_T^{y})-X^{\gamma_{t},\pi}(t_{j},\omega \mathop{\oplus}_{t_i} W_T^{y'})|\\
				&\le \; \lVert \sigma \rVert_{\infty}|y-y'| \prod_{\nu=k}^{j+1}\big(1+K_1 |\pi| +K_1 |\Delta W_{\nu}|\big)
			\end{aligned}
		\end{equation}
		by the induction hypothesis.  
		Then,
		we have 
		\begin{equation}
			\begin{aligned}
				I_1&=K_1(1+T)\mathbb{E}_{t_{i},\cdots,t_{k-1}}\Big[ \max_{j=0,\cdots, n}|  \delta{X}^{\gamma_{t},\pi}(t_{j},y,y')| \Big]\\
				&\le K_1(1+T)\lVert \sigma \rVert_{\infty}|y-y'| \prod_{\nu=k}^{n-1}\mathbb{E}\Big[1+K_1 |\pi| +K_1 |\Delta W_{\nu}|\Big]\\      &\leq K_1(1+T)\lVert \sigma \rVert_{\infty}  \big(1+K_1 |\pi| +K_1 \sqrt{|\pi|}\big)^{n-k} |y-y'|\,,
			\end{aligned}
		\end{equation}
		which gives the desired result.
		
	We focus on $I_3$ as a similar analysis applies to $I_2$.  Observe that
		$Y^{\gamma_{t},\pi,m}$ and $Z^{\gamma_{t},\pi,m}$ are non-anticipative functionals, and $\omega \mathop{\oplus}_{t_i} W_T^{y}=\omega \mathop{\oplus}_{t_i} W_T^{y'}$ on $[0,t_{k-1}]$ 
		from the construction of $W_T^y$ and $W_T^{y'}$. 
	It follows that
		$\delta Y^{\gamma_{t},\pi,m}(t_j,y,y')=0$ and $\delta Z^{\gamma_{t},\pi,m}(t_j,y,y')=0$ for $i\le j <k$. Therefore, the summation in $I_3$ need only be taken over indices $j$ with $k \le j \le n-1$. We have

		\begin{equation}
			\begin{aligned}
				I_3&=\mathbb{E}_{t_{i},\cdots,t_{k-1}}\bigg[ K_1\sum_{j=k}^{n-1}\big|\delta Z^{\gamma_{t},\pi,m}(t_j,y,y') \big|(t_{j+1}-t_j) \bigg]\\
			& \le K_1\sum_{j=k}^{n-1} \mathbb{E}_{t_{i},\cdots,t_{k-1}}  \bigg[ \bigg|\frac{\Delta W_{j}}{t_{j+1}-t_j}\bigg|\bigg( \big| g(X_{T}^{\gamma_{t},\pi}(\omega \mathop{\oplus}_{t_i} W_T^{y}))-g(X_{T}^{\gamma_{t},\pi}(\omega \mathop{\oplus}_{t_i} W_T^{y'})) \big|\\
				&\quad+ \sum_{r=j+1}^{n-1} \big|f(t_{r},X_{t_{r}}^{\gamma_{t},\pi}(\omega \mathop{\oplus}_{t_i} W_T^{y}),Y^{\gamma_{t},\pi,m-1}(t_{r},\omega \mathop{\oplus}_{t_i} W_T^{y}),Z^{\gamma_{t},\pi,m-1}(t_{r},\omega \mathop{\oplus}_{t_i} W_T^{y})\\ 
				&\quad- f(t_{r},X_{t_{r}}^{\gamma_{t},\pi}(\omega \mathop{\oplus}_{t_i} W_T^{y'}),Y^{\gamma_{t},\pi,m-1}(t_{r},\omega \mathop{\oplus}_{t_i} W_T^{y'}),Z^{\gamma_{t},\pi,m-1}(t_{r},\omega \mathop{\oplus}_{t_i} W_T^{y'})\big|(t_{r+1}-t_{r})\bigg) (t_{j+1}-t_j)\bigg]\\
				&\le K_1 \sum_{j=k}^{n-1} \mathbb{E}_{t_{i},\cdots,t_{k-1}} \bigg[ K_1(1+T)|\Delta W_{j}|\lVert \delta{X}_T^{\gamma_{t},\pi}(y,y')\rVert_{\infty}\bigg] \\
				&\quad+K_1\sum_{j=k}^{n-1} \mathbb{E}_{t_{i},\cdots,t_{k-1}} \bigg[|\Delta W_{j}|\sum_{r=j+1}^{n-1} K_1\Big(\big|\delta Y^{\gamma_{t},\pi,m-1}(t_r,y,y') \big|+\big|\delta Z^{\gamma_{t},\pi,m-1}(t_r,y,y') \big|\Big)(t_{r+1}-t_{r}) \bigg]\,.
			\end{aligned}
		\end{equation} 
		We repeat this procedure until $m=0$. Then, every term in the summation of $I_3$ takes the form 
		\begin{equation}
			\sum_{k \le r_1<\cdots<r_v\le n-1, v\le m+1} \mathbb{E}_{t_{i},\cdots,t_{k-1}}\big[ |\Delta W_{r_1}|\cdots |\Delta W_{r_v}| \lVert\delta{X}_T^{\gamma_{t},\pi}(y,y')\rVert_{\infty} \big]
		\end{equation}
		up to multiplicative constants.
		Finally, note that
		\begin{equation}
			\begin{aligned}
				&\mathbb{E}_{t_{i},\cdots,t_{k-1}}\big[ |\Delta W_{r_1}|\cdots |\Delta W_{r_v}| \lVert \delta{X}_T^{\gamma_{t},\pi}(y,y')\rVert_{\infty} \big] \\
				&\le \mathbb{E}\big[|\Delta W_{r_1}|^2\big]^{\frac{1}{2}}\cdots \mathbb{E}\big[|\Delta W_{r_v}|^2\big]^{\frac{1}{2}} \mathbb{E}_{t_{i},\cdots,t_{k-1}}\big[\lVert \delta{X}_T^{\gamma_{t},\pi}(y,y')\rVert_{\infty}^2\big]^{\frac{1}{2}}
				\le c|y-y'|
			\end{aligned}
		\end{equation}
		by H$\ddot{\text{o}}$lder's inequality, together with
		$$\mathbb{E} \big[\big|\!\big|\delta{X}_T^{\gamma_{t},\pi}(y,y')\big|\!\big|_{\infty}^2 \big] \le 3^{n-k}\lVert \sigma \rVert_{\infty}^2  \big(1+K_1^2 |\pi|^2 +K_1^2 |\pi|\big)^{n-k} |y-y'|^2 $$
		which follows directly from \eqref{eqn:aa}. Hence we conclude that $ I_3\le c|y-y'|$ for some constant $c>0.$
	\end{proof}

	Now, we establish the concentration inequality for the proposed numerical scheme.
	
	\begin{thm}\label{thm_con_rate}
		Suppose Assumptions \ref{asmsde} and \ref{asmbsde} hold and that $\sigma$ is uniformly bounded.
		Let $W_T$ be a Brownian motion and 
		$(W_T^{(l)})_{1\le l \le L}$ be independent $L$ copies of $W_T.$ Given $t,\gamma,\pi,m,i, \omega,$ we have the following. 
		\begin{enumerate}[(i)]
			\item  Recall the  map $\Phi:D([0,T],\mathbb{R}^\ell)\to \mathbb{R}$ from  \eqref{eqn:Phi}. There exists a constant  $c>0$ such that
			\begin{equation}
				\mathbb{P}\Big(\Big| \frac{1}{L} \sum_{l=1}^L \Phi(W_T^{(l)})-\mathbb{E}[\Phi(W_T)] \Big|>\epsilon \Big) \le 2e^{-\frac{L\epsilon^2}{4c}}\text{ for all } \epsilon>0\,.
			\end{equation}  
			\item  Define a map $\Psi^{k}:D([0,T],\mathbb{R}^\ell)\to \mathbb{R}$ as 
			\begin{equation}
				\begin{aligned}
					\Psi^{k}(\eta):=& \,\frac{\eta_{k}(t_{i+1})-\eta_{k}(t_i)}{t_{i+1}-t_i}\Big( g(X_{T}^{\gamma_{t},\pi}(\omega \mathop{\oplus}_{t_i} \eta)) \\ &+\sum_{j=i+1}^{n-1} f(t_j,X_{t_j}^{\gamma_{t},\pi}(\omega \mathop{\oplus}_{t_i} \eta),Y^{\gamma_{t},\pi,m}(t_j,\omega \mathop{\oplus}_{t_i} \eta),Z^{\gamma_{t},\pi,m}(t_j,\omega \mathop{\oplus}_{t_i} \eta))(t_{j+1}-t_j) \Big)
				\end{aligned}
			\end{equation} where $\eta_{k}$ is $k$-th coordinate of $\eta$ for $1\le k \le \ell$. Then, there exist positive constants $c$ and $\delta$ such that 
			\begin{equation}
				\mathbb{P}\Big(\Big| \dfrac{1}{L} \sum_{l=1}^L \Psi^{k}(W_T^{(l)})-\mathbb{E}[\Psi^{k}(W_T)] \Big|>\epsilon \Big) \le 2e^{-\frac{L\epsilon^2}{4 c}}
			\end{equation} 
			for all $1\le k\le \ell$ and  $0<\epsilon<2c\delta.$	
		\end{enumerate} 
	\end{thm}

	\begin{proof}

		Since the proof of (i) is analogous to that of (ii), we only provide the proof of (ii). 
		For fixed $k$,  denote $\Psi=\Psi^{k}$ for notational convenience. we show that there exist a constant $c>0$ such that
		\begin{equation}\label{exponentialineq}
			\mathbb{E}[ e^{\lambda (\Psi(W_T)-E[\Psi(W_T)])}] \le e^{c\lambda^2}  \textnormal{ for } |\lambda|< \delta
		\end{equation}
		for some $\delta>0$. 	Once this is proven, by the independence of $(W_T^{(l)})_{1\le l \le L}$, we obtain
		\begin{equation}\label{expopsi}
			\mathbb{E}[ e^{\lambda (\frac{1}{L} \sum_{l=1}^L\Psi(W_T^{(l)})-E[\Psi(W_T)])}] \le e^{\frac{c\lambda^2}{L}}
		\end{equation}
		for $|\lambda|<\delta {L}.$
		By the Chebyshev exponential inequality, for any $\epsilon>0$ and $|\lambda|<\delta L$,
		\begin{equation}
			\mathbb{P}\Big(\Big| \dfrac{1}{L} \sum_{l=1}^L \Psi(W_T^{(l)})-\mathbb{E}[\Psi(W_T)] \Big|>\epsilon \Big) 	\le 2e^{\frac{c\lambda^2}{L}-\epsilon \lambda} 
		\end{equation}
		If $0<\epsilon<2c\delta$, then 
		$\lambda=\frac{L\epsilon}{2c}$ gives the desired result. 
		
		Now, we prove the inequality in \eqref{exponentialineq}.
		Define $\Psi_j:=\mathbb{E}_{t_{i},\cdots,t_{j}}[\Psi(W_T)]$.  It is evident that
		\begin{equation}
			\Psi(W_T)-\mathbb{E}[\Psi(W_T)]=\sum_{j=i}^{n-1} (\Psi_{j+1}-\Psi_j)=\sum_{j=i}^{n-1} (\Psi_{j+1}-\mathbb{E}_{t_{i},\cdots,t_{j}}[\Psi_{j+1}])\,.
		\end{equation}
		From the definition of $\Psi$, we have that for $j=i+1,\cdots,n-1$,
		$$\lambda (\Psi_{j+1}-\mathbb{E}_{t_{i},\cdots,t_{j}}[\Psi_{j+1}])=\lambda \frac{\Delta W_i}{t_{i+1}-t_i} (\Phi_{j+1}-\mathbb{E}_{t_{i},\cdots,t_{j}}[\Phi_{j+1}])\,.$$
		Then,
		\begin{equation}
			\begin{aligned}
				&\;\quad\mathbb{E}[ e^{\lambda (\Psi(W_T)-E[\Psi(W_T)])}]=\mathbb{E}[ e^{\lambda \sum_{j=i}^{n-2} (\Psi_{j+1}-\mathbb{E}_{t_{i},\cdots,t_{j}}[\Psi_{j+1}]) }\mathbb{E}_{t_{i},\cdots,t_{n-1}}[ e^{\lambda (\Psi_{n}-\mathbb{E}_{t_{i},\cdots,t_{n-1}}[\Psi_{n}]) }]] \\
				&=\mathbb{E}\Big[ e^{\lambda \sum_{j=i}^{n-2} (\Psi_{j+1}-\mathbb{E}_{t_{i},\cdots,t_{j}}[\Psi_{j+1}]) }\mathbb{E}_{t_{i},\cdots,t_{n-1}}\Big[ e^{\lambda \frac{\Delta W_i}{t_{i+1}-t_i} (\Phi_{n}-\mathbb{E}_{t_{i},\cdots,t_{n-1}}[\Phi_{n}]) }\Big]\Big] \\
				&\le \mathbb{E}\Big[ e^{\lambda \sum_{j=i}^{n-2} (\Psi_{j+1}-\mathbb{E}_{t_{i},\cdots,t_{j}}[\Psi_{j+1}]) }e^{c_1\lambda^2\big(\frac{\Delta W_i}{t_{i+1}-t_i}\big)^2}\Big]
			\end{aligned}
		\end{equation}
		for some positive constant $c_1$.
		The last inequality can be derived from Lemmas \ref{eqn:expo_inequ} and \ref{liplem} by considering the Lipschitz map
		\begin{equation}\label{eqn:h}\begin{aligned}
				y\mapsto 		h(y)=\lambda \frac{\Delta W_i}{t_{i+1}-t_i}\mathbb{E}\big[\Phi(W_T^{y})|W(t_i),\cdots,\, W(t_{n-1})\big]
			\end{aligned}
		\end{equation} with Lipschitz constant 		
		$c\big|\frac{\Delta W_i}{t_{i+1}-t_i}\big|$ where $W^{y}$ is the process defined in Lemma \ref{liplem} with $W^y(t_n)-W^y(t_{n-1})=y$.
		Repeating this procedure until
		$j=i+1$, we obtain
		\begin{equation}
			\begin{aligned}
				\mathbb{E}\Big[ e^{\lambda (\Psi(W_T)-E[\Psi(W_T)])}\Big]& \le \mathbb{E}\Big[ e^{\lambda  \big(\Psi_{i+1}-\mathbb{E}[\Psi_{i+1}|W(t_i)]\big) }e^{\frac{c_1(n-i-1)\lambda^2}{(t_{i+1}-t_{i})^2}(\Delta W_i)^2}\Big] \\
				& \le \mathbb{E}\Big[ e^{2\lambda  \big(\Psi_{i+1}-\mathbb{E}[\Psi_{i+1}|W(t_i)]\big) } \Big]^{\frac{1}{2}} \mathbb{E} \Big[e^{\frac{2c_1(n-i-1)\lambda^2}{(t_{i+1}-t_{i})^2}(\Delta W_i)^2}\Big]^{\frac{1}{2}}=:I_1  I_2\,.
			\end{aligned}
		\end{equation}
		By Lemma \ref{meanerrorlemma}, there exist positive constants $c_2$ and $\tilde{\delta}$ such that for   $|\lambda|<\tilde{\delta}$,
		\begin{equation}
			I_1= \mathbb{E}\Big[ e^{2\lambda  \big(\Psi_{i+1}-\mathbb{E}[\Psi_{i+1}]\big) } \Big]^{\frac{1}{2}}= \mathbb{E}\Big[ e^{2\lambda  \big(\frac{\Delta W_i}{t_{i+1}-t_i}\Phi_{i+1}-\mathbb{E} [\frac{\Delta W_i}{t_{i+1}-t_i}\Phi_{i+1}] \big) } \Big]^{\frac{1}{2}}\le e^{c_2\lambda^2}\,.
		\end{equation} Moreover, using the fact that $\frac{\Delta W_i}{\sqrt{t_{i+1}-t_i}}$ has a standard normal distribution, we can directly derive that
		\begin{equation}
			\begin{aligned}
				I_2^2=\dfrac{1}{\sqrt{1-\frac{2c_1(n-i-1)}{|\pi|}\lambda^2}}\le e^{\frac{2c_1(n-i-1)}{|\pi|}\lambda^2}
			\end{aligned}
		\end{equation}
		for $\lambda\in\mathbb{R}$ satisfying 
		$\frac{2c_1(n-i-1)}{|\pi|}\lambda^2<\frac{1}{2}$.
		Combining these inequalities for $I_1$ and $I_2$, we obtain
		$$\mathbb{E}[ e^{\lambda (\Psi(W_T)-E[\Psi(W_T)])}] \le e^{c\lambda^2}  \textnormal{ for } |\lambda|< \delta$$
		where $\delta=\min(\tilde{\delta}, {\sqrt{\frac{|\pi|}{4c}}})$ and $c=\frac{c_1}{|\pi|}(n-i-1)+c_2$. 
	\end{proof}

		\section{Numerical simulations}\label{sec:NS}
		This section presents a series of numerical simulations to illustrate the performance of the Picard iteration scheme described in Eq.\eqref{scheme}.

	\subsection{Path-dependent PDEs}

		In this section, we compute a numerical solution to the second-order parabolic PPDE
		\begin{equation} \label{eqn:appli_PDE}
			\begin{aligned}
				&\partial_{s}u+\frac{1}{2}\sigma^2 \partial_{\omega\omega}^2 u-\frac{1}{2} \omega(s) \partial_{\omega} u + \frac{1}{2} \omega(s) + \int_0^s \omega(u)\,du -u=0 \,,\;0\le s<T\,,\\
				&u(T,\omega)=\omega(T)+\int_0^T \omega(u)\,du\,.
			\end{aligned}
		\end{equation}
		For initial time $t$ and initial path $\gamma$, we consider the   SDE
		\begin{equation} 
			\begin{aligned}
				X^{\gamma_{t}}(s)&=\gamma_{t}(t)-\int_{t}^{s} \dfrac{1}{2} X^{\gamma_{t}}(u)\,du+ \int_{t}^{s} \sigma \,dW(u) \,,&& t\le s \le T\,, \\
				X^{\gamma_{t}}(s)&=\gamma_{t}(s) \,, && 0 \le s \le t
			\end{aligned}
		\end{equation}
		and the BSDE
		\begin{equation}
			\begin{aligned} 
				Y^{\gamma_{t}}(s)&=\bigg( X^{\gamma_{t}}(T)+\int_{0}^{T} X^{\gamma_{t}}(u)\,du \bigg) \\
				&+\int_{s}^{T}\bigg( \frac{1}{2} X^{\gamma_{t}}(u)+\int_{0}^{u} X^{\gamma_{t}}(r)\,dr -Y^{\gamma_{t}}(u) \bigg) \,du-\int_{s}^{T} Z^{\gamma_{t}}(u)\,dW(u) \,,\; t\le s\le T.
			\end{aligned}
		\end{equation}
		By Proposition \ref{uniclassical}, the mapping $u(t,\gamma):=Y^{\gamma_{t}}(t)$ is the unique classical solution to \eqref{eqn:appli_PDE}.

		We compare our numerical solution 
		with the exact solution.
		Simulations are conducted for the aforementioned  path-dependent FBSDE using 
		the parameters
		$$\gamma(s)=12(1+s)^2,\, \sigma=1,\, T=1,\,n_0=100,\, n=40,\, \pi=\Big\{t+\frac{(T-t)i}{n} \Big| i=0,\cdots,n\Big\},\,m=8$$
		where $n_0$ 
		is the number of 
		equidistant time steps in $[0,t]$ for  piecewise constant interpolation of $\gamma_t$.
		This PPDE has the closed-form solution $u(t,\gamma)=\gamma(t)+\int_0^t \gamma(u)\,du$.
		Table \ref{table:compare} 
		compares our numerical solutions with the exact solutions for several initial times $t=0,0.1,\cdots,0.9.$
		The relative error is below $1.11\%$ in all cases.

		\begin{table}[h!]
			\centering
			\begin{tabular}{|c|c|c|}
				\hline
				Initial time ($t$) & Exact solution & Numerical solution \\ \hline
				0            & 12.0000                  & 12.1321                  \\ \hline
				0.1          & 15.8440                  & 15.9514                  \\ \hline
				0.2          & 20.1920                  & 20.3769                   \\ \hline
				0.3          & 25.0680                  & 25.2091                   \\ \hline
				0.4          & 30.4960                  & 30.6390                   \\ \hline
				0.5          & 36.5000                  & 36.5863                   \\ \hline
				0.6          & 43.1040                  & 43.1279                   \\ \hline
				0.7          & 50.3320                  & 50.3017                   \\ \hline
				0.8          & 58.2080                  & 58.1310                   \\ \hline
				0.9          & 66.7560                  & 66.6132                   \\ \hline
			\end{tabular}
			\caption{Comparison of exact and numerical solutions for various initial times}\label{table:compare}
		\end{table}

		The estimated values  converge rapidly to the exact solution 
		as the  number of Picard iterations increases.
		As stated in Theorem \ref{thm:main}, the rate of convergence is exponential until it reaches the discretization error $C|\pi|$.
		Table \ref{picnumvalue} 
		presents the estimated values of $Y^{\gamma_{t},\pi, m}(t)$  for $t=0.1$ and  $m=1,2,\cdots,8$ and shows
		that the  estimated values approach the exact value  $15.844$ as $m$ increases. The relative error is below $0.7\%$ for $m=8.$
		
		\begin{table}[h!]
			\footnotesize
			\centering
			\begin{tabular}{|c|c|c|c|c|c|c|c|c|}
				\hline
				\cline{1-2}
				\#Picard iteration ($m$) & 1 & 2 & 3 & 4 & 5 & 6 & 7 & 8 \\ \hline 
				\cline{1-2}
				$Y^{\gamma_{t}, \pi,m}(t)$ & 32.7613 & 7.8671 & 18.5511 & 15.3190 & 16.0657 & 15.9242 & 15.9521 & 15.9514
				\\ \hline \cline{1-2}
			\end{tabular}
			\caption{Estimated values of $Y^{\gamma_{t},\pi, m}(t)$ as the  number of Picard iterations increases for $t=0.1$}
			\label{picnumvalue}
		\end{table}

	\subsection{Lookback options}
Option pricing  is one of the important topics in mathematical finance.
A floating-strike lookback call option  is  a financial derivative that allows the holder to buy the underlying asset at its lowest price during a specified time period.
In this section, we study a numerical method for estimating the
floating-strike lookback call option prices
under the Black--Scholes model, 
following the approach of \citet{deepgalerkin}.
The option price is given as a solution to the system of FBSDE
\begin{equation} 
\begin{aligned}
X^{\gamma_{t}}(s)&=\gamma_{t}(t)+\int_{t}^{s} r X^{\gamma_{t}}(u)\,du+ \int_{t}^{s} \sigma X^{\gamma_{t}}(u) \,dW(u) \,, &&t\le s \le T \,,\\
X^{\gamma_{t}}(s)&=\gamma_{t}(s) \,, &&0 \le s \le t \,,\\    
Y^{\gamma_{t}}(s)&=\big[ X^{\gamma_{t}}(T)-\inf_{0\le u \le T} X^{\gamma_{t}}(u)  \big] 
-\int_{s}^{T} rY^{\gamma_{t}}(u) \,du-\int_{s}^{T} Z^{\gamma_{t}}(u)\,dW(u) \,, &&t\le s\le T\,.
\end{aligned} 	
\end{equation}
This system of FBSDE has the closed-form solution 
$$Y^{\gamma_{t}}(t)=\gamma(t) \Phi(a_1) - m(t) e^{-r(T-t)}\Phi(a_2)-\gamma(t) \frac{\sigma^2}{2r}\bigg(\Phi(-a_1)-e^{-r(T-t)} \bigg(\frac{m(t)}{\gamma(t)} \bigg)^{\frac{2r}{\sigma^2}}\Phi(-a_3)\bigg)$$
where $m(t)=\inf_{0\le u \le t} \gamma(u)$ and $$a_1=\dfrac{\text{log}\big(\frac{\gamma(t)}{m_{t}} \big)+(r+\frac{\sigma^2}{2})(T-t)}{\sigma \sqrt{T-t}}\,,\; a_2=a_1-\sigma\sqrt{T-t}\,,\; a_3=a_1-\frac{2r}{\sigma}\sqrt{T-t}.$$

We compare the obtained numerical solution 
with the exact solution.
Simulations are conducted for the aforementioned  path-dependent FBSDE using 
the parameters
$$\gamma(s)=(1-s)^2,\, r=0.03,\,n_0=100,\, n=60,\,\sigma=1,\, T=1,\,m=10\,.$$ 
Table \ref{table:lookback} compares the exact solutions and numerical solutions for $t=0,\,0.1,\cdots,\, 0.7$. The relative error in all cases is below $6.7\%$.

\begin{table}[h!]
	\centering
	\begin{tabular}{|c|c|c|}
		\hline
		Initial time ($t$) & Exact solution & Numerical solution \\ \hline
		0            & 0.5870                  & 0.5591                  \\ \hline
		0.1          &  0.4584                 & 0.4351                  \\ \hline
		0.2          &  0.3474                 & 0.3284                   \\ \hline
		0.3          &  0.2533                 & 0.2389                   \\ \hline
		0.4          &  0.1757                 & 0.1655                   \\ \hline
		0.5          &  0.1137                 & 0.1066                   \\ \hline
		0.6          &  0.0666                 & 0.0623                   \\ \hline
		0.7          &  0.0333                 & 0.0311                   \\ \hline
		
	\end{tabular}
	\caption{Comparison of exact and numerical solutions for lookback option prices}\label{table:lookback}
\end{table}

	\section{Conclusion} \label{sec:con}	This paper develops a numerical  method for solving path-dependent FBSDEs and PDEs. Under standard Lipschitz conditions on the coefficients, we establish the convergence of the Picard iteration to the true solution and derive its convergence rate with respect to both the time discretization mesh size and the number of iterations. We first prove convergence results under the assumption of smooth coefficients and subsequently extend them to more general, possibly non-smooth settings via an appropriate approximation argument. A key component of our approach is the use of Monte Carlo estimators for the components 	$Y$ and $Z$ in the FBSDE, specifically designed to accommodate path-dependence. We rigorously analyze the statistical errors of these estimators and provide concentration inequalities that characterize their probabilistic accuracy. 		Based on these results, we propose a supervised neural network method for solving path-dependent PDEs and establish an approximation theorem for neural network learning of PPDEs.

	$ $

	\noindent\textbf{Acknowledgement.} Hyungbin Park was supported by the National Research Foundation of
	Korea (NRF) grants funded by the Ministry of Science and ICT (No. 2021R1C1C1011675 and
	No. 2022R1A5A6000840). Financial support from the Institute for Research in Finance and
	Economics of Seoul National University is gratefully acknowledged.

	\bibliographystyle{abbrvnat}
	\bibliography{schemeref1}
\end{document}